\newcommand{\dpw}{d_\mathcal{C^*}}	
\newcommand{\dpants}{d_{\mathcal{P}}}
\newtheorem{theorem}{Theorem}[section]
\newtheorem{proposition}[theorem]{Proposition}
\newtheorem{lemma}[theorem]{Lemma}
\newtheorem{remark}[theorem]{Remark}
\newtheorem{example}[theorem]{Example}
\newtheorem{definition}[theorem]{Definition}
\def\moverlay{\mathpalette\mov@rlay}
\def\mov@rlay#1#2{\leavevmode\vtop{%
   \baselineskip\z@skip \lineskiplimit-\maxdimen
   \ialign{\hfil$\m@th#1##$\hfil\cr#2\crcr}}}
\newcommand{\charfusion}[3][\mathord]{
    #1{\ifx#1\mathop\vphantom{#2}\fi
        \mathpalette\mov@rlay{#2\cr#3}
      }
    \ifx#1\mathop\expandafter\displaylimits\fi}
\title[Simplicial complexes with applications to 3-manifolds]{Estimating distances in simplicial complexes with applications to 3-manifolds and handlebody-knots}
\author{Sayantika Mondal}
\address[Sayantika Mondal]{The Graduate Center, CUNY \\ 365 Fifth Ave., N.Y., N.Y., 10016}
\email{smondal@gradcenter.cuny.edu}
\thanks{}
\author{Puttipong Pongtanapaisan}
\address[Puttipong Pongtanapaisan]{Arizona State University}
\email{ppongtan@asu.edu}
 \thanks{}
 \author{Hanh Vo}
\address[Hanh Vo]{Arizona State University}
\email{hanhmfa@gmail.com}
 \thanks{}
\keywords{handlebody-knots, simplicial complexes, 3-manifolds}
\subjclass[2020]{Primary 57M25; Secondary 57M27, 57M50}
\begin{document}

\begin{abstract}
 We study distance relations in various simplicial complexes associated with low-dimensional manifolds. In particular, complexes satisfying certain topological conditions with vertices as simple multi-curves. We obtain bounds on the distances in such complexes in terms of number of components in the vertices and distance in the curve complex.
   
We then define new invariants for closed 3-manifolds and handlebody-knots. These are defined using the splitting distance which is calculated using the distance in a simplicial complex associated with the splitting surface arising from the Heegard decompositions of the 3-manifold. We prove that the splitting distances in each case is bounded from below under stabilizations and as a result the associated invariants converge to a non-trivial limit under stabilizations.
\end{abstract}

\maketitle


\section{Introduction}   
There has been a great deal of interest in simplicial complexes such as the curve complex and the pants complex due to their applications to mapping class groups of surfaces and Teichm\"{u}ller theory. Since low-dimensional manifolds and knotted objects inside them can be decomposed into simple pieces along a surface $\Sigma$, one can use the aforementioned complexes on $\Sigma$ to measure the entanglement complexities.

Previously, Campisi and Rathbun studied knotted graphs using arc and curve complex \cite{campisi2018hyperbolic} on the splitting surface. In particular, if the distance in the complex is greater than 3, then the exterior of the knotted graph is hyperbolic. The complexity discussed in this paper provides a more suitable measure for splittings with low arc and curve distance. In particular, stabilizing the splitting can cause the arc and curve distance to drop drastically, but not the complexities we calculate.
Nevertheless, our complexities can be estimated using the arc and curve complex.

Our general strategy for defining a 3-manifold invariant involves starting with a Heegaard splitting of our 3-manifold, then defining a distance $D(\Sigma)$ for the splitting surface using a simplicial complex and then using this distance to define a complexity measure. And finally showing that the sequence of complexities converges under stabilizations. We show this by proving that the splitting distance is bounded below. Thus, having universal lower bounds for splitting distances defined using various complex would be useful. 

Our main results in Section \ref{section relation distance} finds lower bounds on distance in various simplicial complexes associated with a surface in terms of the number of components of each vertex of the 1-skeleton of the complex and minimum distance in curve complex between the components of the vertices.

Let $\Sigma_{g,n}$ be a finite type surface with genus $g$ and $n$ punctures. We consider simplicial complexes associated with this surface, where for some fixed natural number $N$, the vertices are simple multi-curves on the surface, with each multi-curve consisting of $N$ components (disjoint simple closed curves) and the edges corresponding to some set of \textit{admissible} moves. An admissible move corresponds to replacing one simple curve component of the vertex with another satisfying the conditions that the new curve intersects the one it is replacing and is disjoint from the other, in addition to other conditions. We call such a simplicial complex an \textit{admissible multi-curve complex}.

We show that for any such simplicial complex, the following result holds.

\vspace{10pt}

\noindent \textbf{Theorem A} (Theorem \ref{general complex distance}): \textit{Let $V_1$ and $V_2$ be two vertices of an admissible multi-curve complex.
 For all $k\ge1$, if each loop of $V_1$ has a curve complex distance of at least $k$ from any loop of $V_2$, then 
 \[
 d_{cw} (V_1,V_2) \geq k (N -1) + 1.
\]}

  where $d_{cw}$ is the distance in the 1-skeleton of the simplicial complex. 

\vspace{10pt}

We prove a special case of Theorem A in Theorem \ref{prop dC and dP}, where the simplicial complex is the dual curve complex. In this case, we also provide a geometric proof (See Proposition \ref{proposition general formula flipping}).
\vspace{10pt}

\noindent \textbf{Theorem B} (Theorem \ref{prop dC and dP}):
\textit{Let $P$ and $T$ be pants decompositions. 
For all $k\ge1$, if each curve of $P$ has a curve complex distance at least $k$ from any curve of $T$ then 
\[\dpw (P,T) \geq k (N -1) + 1.
\]}
 where $\dpw$ is the distance in the dual curve complex and $N$ is the number of curves in a pants decomposition.
\vspace{10pt}

Johnson in \cite{johnson2006heegaard} introduced two notions of distances for a splitting surface and integral measures of complexity for a 3-manifold using the dual curve complex and the pants complex. And showed that as the Heegaard splitting is stabilized, the sequence of complexities converges to a non-trivial limit depending only on the manifold. We define a similar splitting distance $D^{HT}(\Sigma_g)$ using the Hatcher-Thurston cut system, where the distance is the minimum over cut systems that define the two handlebodies in the Heegaard splitting and define a complexity $A^{HT}_g(\Sigma) = D^{HT}(\Sigma_g)-g-n$.  
Here $n$ is the number of $S^1\times S^2$ components of the prime decomposition of the manifold.
Defining a handlebody means the curves of the vertex set bounds disks in the handlebodies. 
We show this sequence of complexities converge under stabilizations. 

\textbf{Theorem C} (Theorem \ref{Hatcher-Thurston}): \textit{The limit $\lim_{g\rightarrow\infty} A^{HT}_g(\Sigma)$ exists and is a 3-manifold invariant.}

Ozawa in \cite{ozawa2021stable} defined a notion of stable equivalence for bridge-positions of handlebody-knots. We define a stable invariant for knotted handlebodies, in a similar spirit as that for Heegaard splittings of 3-manifolds using a splitting distance defined in terms of dual curve distance. The main difference being we allow the pants curves to now bound punctured disks or disks in the handlebodies. We define the complexity measure to be $$B_\Sigma(c,s_1,s_2) = \frac{1}{4}D(\Sigma_{c,s_1,s_2}) - \frac{s_1+s_2}{4}$$ where $s_1 , s_2$ are the number of $S_1$ and type $S_2$ stabilization moves respectively (see Figure \ref{fig:2moves} for the two types of moves) and $c$ is number of punctures of the splitting surface.  $D(\Sigma_{c,s_1, s_2})$ denotes the dual curve distance of a bridge sphere, that is the minimum of the dual curve graph distance between pants decompositions that define the two handlebodies corresponding to a Heegaard slitting along a $c$-punctured sphere, respectively, after $s_1$ and $s_2$ moves. 

\textbf{Theorem D} (Theorem \ref{main invariant}): \textit{ The limit $\lim_{s_1\to\infty, s_2\to\infty} B_\Sigma(c,s_1,s_2)$ exists and is an invariant.}

\subsection{Section Overview}

We begin by introducing various structures associated with 2 and 3-manifolds in Section \ref{Background}. Various simplicial complexes that can be associated with an orientable surface with simple multicurves on the surface as vertices, are introduced in Section \ref{Simplicial complexes} and Section \ref{Distances in simplicial complexes}.  
In Section \ref{3 mani} and \ref{dist 3 mani} we discuss 3-manifold decompositions, and associated distances and complexity measures.

In Section \ref{section relation distance}, we study relations between distances in various complexes and their relations to distances in the curve complexes. In particular, we find lower bounds for distances in any complex in terms of the number of components of vertices and minimum distance of the vertices in the curve complex.

We next use these bounds to study the complexities for 3-manifolds in Sections \ref{Applications to 3-manifolds}. In Section \ref{Hatcher-Thurston}, we study a complexity invariant defined using the Hatcher-Thurston cut system for closed 3 manifolds. In Section \ref{section handle body links}, we study a complexity associated with handlebody-knots. In both cases, the complexity is defined using distances in simplicial complexes on the splitting surface corresponding to a Heegard splitting of the 3-manifold. We get immediate lower bounds on the distance and hence the complexity using results from Section \ref{section relation distance}, these bounds are in fact stronger, as long as the minimum distance is greater than 1, but it remains to be seen if this condition always holds. In the case where the minimum distance is $0$ this gives us no information, hence we compute alternate bounds on the distance and complexity measures that are independent of $k$.

We end with some open questions in Section \ref{open questions}.

\subsection{Notation}
In the following table, we list commonly used notations and the first section they appear in.

\begin{center}\label{Table:notation}
	\begin{tabular}{| l | c | c |}
		\hline \hline 
		{\bf Definition} & {\bf Section} &{\bf Notation} \\
		\hline
		\hline 
		\hline
	Curve complex& 1 &$C(\Sigma)$ \\
	\hline
         Distance in curve complex&2 &$d(\cdot,\cdot)$ \\
        \hline
        Dual curve complex & 2 & $C^*(\Sigma)$\\
        \hline
        Dual curve complexity& 4 & $A_g(\Sigma)$ \\
        \hline
        Dual curve distance &2 &$\dpw(\cdot,\cdot)$ \\
        \hline
        Dual distance of $\Sigma$ & 2 &$D(\Sigma)$ \\
        \hline
        Handlebody-knot complexity& 1 & $B_{\Sigma}(c,s_1, s_2)$\\
        \hline
	  Hatcher-Thurston cut system  & 2& $C^{HT}(\Sigma)$ \\
      \hline
      Hatcher-Thurston cut system complexity&4&$A^{HT}_g(\Sigma)$\\
	\hline
        Heegaard splitting  &2 &$(\Sigma, H_1, H_2$) \\
        \hline
        Hempel distance & 2 & $d(\Sigma)$ \\
        \hline
        Pants complex& 1 & $\mathcal{P}(\Sigma)$\\
        \hline
        Pants distance & 2 & $\dpants(\cdot,\cdot)$ \\
        \hline
        Pants graph complexity& 4 & $A_g^{\mathcal{P}}(\Sigma)$\\
        \hline
        Pants distance of $\Sigma_g$ & 4 & $D^{\mathcal{P}}(\Sigma_g)$\\
        \hline
		
		\hline
		\hline
	\end{tabular} 
\end{center}

\subsection{Acknowledgement}
We would like to thank Ara Basmajian, Tommaso Cremaschi and Julien Paupert for support and feedback. Hanh Vo is supported by the AMS-Simons Travel Grant.

\section{Background}\label{Background}

\subsection{Simplicial complexes}\label{Simplicial complexes}
 Let $\Sigma$ be a
 connected, orientable surface of negative Euler charateristic.  
 Then, we can define the following simplicial complexes, whose 1-skeletons are metric spaces. We will often refer to the complexes simply by the graph that gives their 1-skeleton.
 
 \subsubsection{Curve complex}
 
The curve complex $C(\Sigma)$ is the cell complex defined as follows: The vertices of $C(\Sigma)$ are isotopy classes of non-trivial, simple closed curves in $\Sigma$. An edge connects two vertices if and only if there are representatives of the two isotopy classes
which are disjoint.
We can attach cells of higher dimensions to the graph. A collection of vertices
$\{u_0, \cdots , u_n\}$ bounds an $n$–simplex if and only if these vertices are pairwise disjoint.
 
 \subsubsection{Dual curve complex}
The \emph{dual curve complex} $C^*(\Sigma)$ is the graph whose vertices correspond to maximal simplices in the curve complex $C(\Sigma)$—that is, pants decompositions of $\Sigma$. Two vertices in $C^*(\Sigma)$ are connected by an edge if their corresponding pants decompositions differ by replacing a single curve with another that is disjoint from the rest, resulting in a new pants decomposition.

\subsubsection{Pants complex} 
A \emph{pants decomposition} of $\Sigma$ is a maximal collection of disjoint, non-parallel, essential simple closed curves such that cutting along them decomposes $\Sigma$ into three-holed spheres (also called pairs of pants). 
The \emph{pants complex} $\mathcal{P}(\Sigma)$ is a graph defined as follows:
\begin{itemize}
  \item \textbf{Vertices:} Each vertex corresponds to a pants decomposition of $\Sigma$.
  \item \textbf{Edges:} Two vertices are connected by an edge if the corresponding pants decompositions differ by an \emph{elementary move}: replacing a single curve in the decomposition with another disjoint curve such that the result is again a pants decomposition (see Figure \ref{pants-moves}). The replacement must occur within a subsurface of complexity 1 (i.e., a four-holed sphere or a one-holed torus).
\end{itemize}

\begin{figure}[h]
    \centering \includegraphics[width=0.40\linewidth]{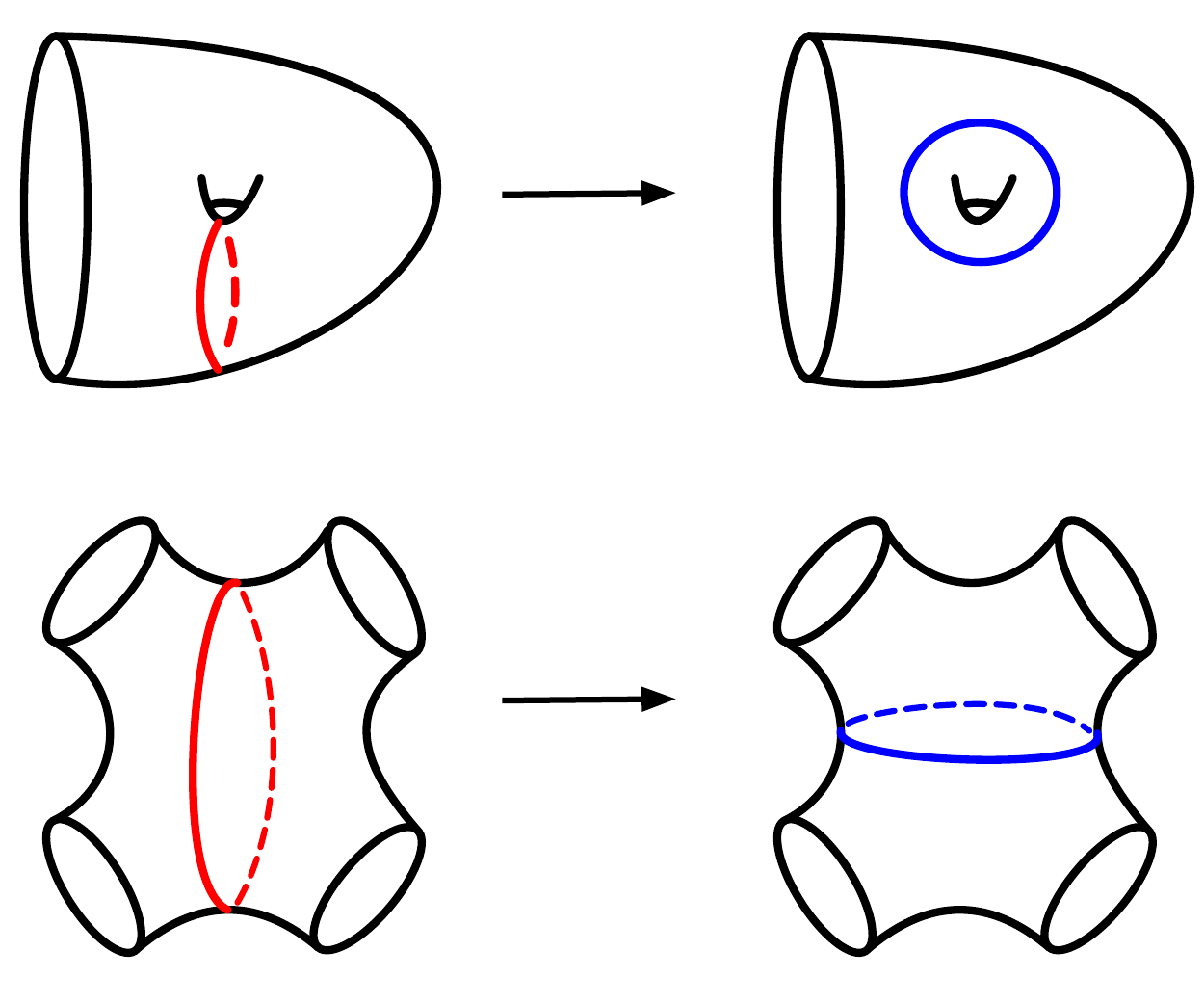}
    \caption{Two elementary moves}
    \label{pants-moves}
\end{figure}

\subsubsection{Hatcher-Thurston cut system}
Let $\Sigma$ be a closed orientable surface of genus $g$. 
Let  $C_1, \cdots, C_g$ be an unordered collection of $g$ loops in $\Sigma$,
whose complement $\Sigma -(C_1, \cdots, C_g)$ is a $2g$-punctured sphere.  An isotopy class of such
systems $\{C_1, \cdots, C_g\}$ is called a cut system ($C^{HT}(\Sigma)$).

Replacing some curve $C_i$ in a cut system by another simple loop  $C_i'$ that intersects $C_i$ transversely in one point and is disjoint from $C_j$ for all $j \neq i$, gives us another cut system.  Such a replacement is called a simple move.
We can define a cut system graph with vertices as cut-systems and an egde between them when two vertices are related by a simple move. This can be extended to a simplicial complex by attaching $3, 4,$ or $ 5$- gons to cycles of simple moves of the same order. See \cite{hatcher1980presentation} for more details.

\subsection{Distances in simplicial complexes}\label{Distances in simplicial complexes}

The 1-skeleton of simplicial complexes are graphs, that can be viewed as metric spaces with each edge being distance one. 
Distance between two vertices is given by the length of a shortest geodesic path joining them. 

We denote by $d(\cdot,\cdot)$ the distance in the curve graph between two curves. 
The distance formula for the curve complex obtained by Masur-Minsky in \cite{masur1999geometry}  is 
\[
d(x, y) \asymp \sum_{Y \subseteq S} \left[ d_{\mathcal{C}(Y)}(\pi_Y(x), \pi_Y(y)) \right]_K
\] 
Where:
\begin{itemize}
    \item \( x, y \) are markings or curves on the surface \( S \),
    \item \( Y \subseteq S \) ranges over essential subsurfaces,
    \item \( \pi_Y(x) \) is the projection of \( x \) to the curve complex \( \mathcal{C}(Y) \),
    \item \( d_{\mathcal{C}(Y)} \) is the distance in the curve complex of \( Y \),
    \item \( [\cdot]_K \) is a threshold function: 
    \[
    [a]_K = 
    \begin{cases}
    a & \text{if } a \geq K, \\
    0 & \text{otherwise},
    \end{cases}
    \]
    \item and \( \asymp \) means the sum is quasi-equal to the distance up to multiplicative and additive constants.
\end{itemize}

The \emph{dual distance} $\dpw (v, v')$ between two vertices in $\mathcal{C}^*(\Sigma)$ is the length of the shortest  
path in $\mathcal{C}^*(\Sigma)$ between them. 
We let $\dpants(v, v')$ be the distance between vertices $v$ and $v'$ in the pants complex.

\begin{remark}\label{dc and dp}
    Because of the one-to-one correspondence between the vertices of $\mathcal{C}^*(\Sigma)$ and the vertices of $\mathcal{C}_P(\Sigma)$,  
we can think of $v$ and $v'$ as being in either graph. An edge path in $\mathcal{C}_P$ maps to an edge path  
of the same length in $\mathcal{C}^*$, so  
\[
\dpw(v, v') \leq \dpants(v, v').
\]
\end{remark}

We denote by $d_{HT}(v, v')$  the shortest distance between vertices $v$ and $v'$ in the Hatcher-Thurston cut system complex.

\subsection{3-manifold decompositions along a surface}\label{3 mani}

There are various ways of decompositioning a 3-manifold along surface, the most well known being a handlebody decomposition or a Heegard splitting. This allows us to use structures on the splitting surface to study the 3-manifold.

\subsubsection{Heegaard Splittings}

A \emph{Heegaard splitting} of a closed, orientable 3-manifold \( M \) is a decomposition of \( M \) into two handlebodies \( H_1 \) and \( H_2 \) of the same genus, such that
\[
M = H_1 \cup_\Sigma H_2,
\]
where \( \Sigma = \partial H_1 = \partial H_2 \) is a closed, orientable surface called the \emph{Heegaard surface} \cite{hempel2004, jaco1980}. The genus of the Heegaard surface is known as the \emph{genus of the Heegaard splitting}. The minimal genus over all such splittings of \( M \) is called the \emph{Heegaard genus} of \( M \). 
Heegaard splittings play a central role in the classification of 3-manifolds and in understanding their topology via surface and group-theoretic methods \cite{schultens2014}.

\subsection{Distances and complexities in 3-manifolds}\label{dist 3 mani}

Let $H$ be a handlebody and let $\varphi\colon \Sigma \to \partial H$ be a homeomorphism.  
For a vertex $u \in C(\Sigma)$, write $u \in H$ if, for some loop $l$ in the isotopy class corresponding to $u$,  
$\varphi(l)$ bounds a disk in $H$. 
Let $(\Sigma, H_1, H_2)$ be a Heegaard splitting of a manifold $M$. Consider the inclusion maps  
\[
\Sigma \hookrightarrow H_i.
\]  
Each map suggests a set of vertices in $C(\Sigma)$ which are in $H_i$. The standard  
distance of $\Sigma$, as in Hempel \cite{MR1838999}, is  
\[
d(\Sigma) = \min\{d(u, u') \mid u \in H_1, u' \in H_2\}.
\]  
The distance $d(\Sigma)$ measures the irreducibility of $\Sigma$, in the sense that:
\begin{itemize}
    \item if $d(\Sigma) = 0$ then $\Sigma$ is reducible
    \item if $d(\Sigma) = 1$ then $\Sigma$ is weakly reducible
    \item if $d(\Sigma) = 2$ then $\Sigma$ has the disjoint curve property.
\end{itemize}

For a Heegaard splitting $(\Sigma, H_1, H_2)$, we say $v$ defines $H_i$ if $v$ is a pants decomposition of $\Sigma$ such that each curve of $v$ bounds a disk in $H_i$.  
The dual distance  
of $\Sigma$ is  
\[
D(\Sigma) = \min\{\dpw (v, v') \mid v \text{ defines } H_1,\, v' \text{ defines } H_2\}.
\]  
Note that $D(\Sigma) \geq d(\Sigma)$. 
Hempel has shown that there are genus two Heegaard splittings such that $d(\Sigma)$ is  
arbitrarily large. Thus, there are Heegaard splittings with $D(\Sigma)$ arbitrarily large.

Similarly we can also define the pants distance of a splitting surface,

\[
D^P(\Sigma) = \min\{\dpants (v, v') \mid v \text{ defines } H_1,\, v' \text{ defines } H_2\}.
\]  
For a Heegaard splitting $(\Sigma, H_1, H_2)$, we say $v$ defines $H_i$ if $v$ is a cut system of $\Sigma$ such that each curve of $v$ bounds a disk in $H_i$.  
The Hatcher-Thurston cut system distance  
of $\Sigma$ is 
\[
D^{HT}(\Sigma) = \min\{d_{HT} (v, v') \mid v \text{ defines } H_1,\, v' \text{ defines } H_2\}.
\]  

For handlebody knots, we extend the definitions above as follows: by a vertex $v$ defines $H_{i}$ we mean that each curve of $v$ bounds a disk or a punctured disk in $H_{i}$.

Johnson \cite{johnson2006heegaard} defines integral measures of complexity for Heegaard splittings based on the dual curve graph and on the pants complex, respectively, as below.

$$
A_g(\Sigma)=D\left(\Sigma_g\right)+b-g \text { and } A_g^P(\Sigma)=D^P\left(\Sigma_g\right)+b-g.
$$
Here \( b \) is the sum of the genera of the boundary components of the manifold.
 As the Heegaard splitting is stabilized, the sequence of complexities converges to a non-trivial limit depending only on the manifold, giving a stable invariant for the 3-manifold.

We similarly define and study $$A^{HT}_g(\Sigma)=D^{HT}\left(\Sigma_g\right)-g-n$$ in Section \ref{Hatcher-Thurston}.
Here $n$ is the number of $S^1\times S^2$ components of the prime decomposition of the manifold.

\section{Estimating distances in simplicial complexes and sub-complexes using curve complex}\label{section relation distance}

In this section, we obtain lower bounds on distances between vertices of various simplicial complexes in terms of distances of the components of the vertices in the curve complex.

\subsection{Distance relation between the curve graph and the dual curve graph}

In this section, we study the relation between the distance in the curve graph and that of the dual curve graph. The main results are Theorem \ref{prop dC and dP} and \ref{proposition general formula flipping}.
Proposition \ref{prop dC and dP} provides a lower bound of the distance in the dual curve complex in terms of distance in the curve complex between the curves and the number of curves. 
Proposition \ref{proposition general formula flipping} provides a geometric proof of the above result.  
Let $P$ be a pants decomposition. Let $N$ be the number of curves of $P$.

\begin{theorem}\label{prop dC and dP}
Let $P$ and $T$ be pants decompositions. 
For all $k\ge1$, if each curve of $P$ has a distance at least $k$ from any curve of $T$ then 
\[
\dpw (P,T) \geq k (N -1) + 1.
\]
\end{theorem}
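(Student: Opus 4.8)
The plan is to analyze a shortest edge path in the dual curve complex $\mathcal{C}^*(\Sigma)$ from $P$ to $T$ and track how the curves change along it. Write the path as $P = P_0, P_1, \dots, P_m = T$ with $m = \dpw(P,T)$. By definition of an edge in $\mathcal{C}^*(\Sigma)$, passing from $P_{i-1}$ to $P_i$ replaces exactly one curve, say $c_{i-1} \in P_{i-1}$, by a new curve $c_i'$ that is disjoint from all the other curves of $P_{i-1}$ (which are therefore common to $P_{i-1}$ and $P_i$). The key observation is that, since the replaced curve is disjoint from everything retained, consecutive pants decompositions share $N-1$ curves, and each step only ``moves'' one slot of the decomposition.

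The core of the argument is a potential/counting function. For each index $j \in \{1,\dots,N\}$ I would like to say that the curve occupying ``slot $j$'' must travel curve-complex distance at least $k$ to get from its value in $P$ to its value in $T$; summing over slots and relating curve-complex steps to dual-complex steps would give roughly $kN$, but this overcounts because a single dual move can simultaneously be the final step needed for one slot and is shared structurally, and because the ``$+1$'' and the ``$N-1$'' rather than ``$N$'' reflect that the last curve to be fixed is constrained by disjointness from the others. Concretely, I would argue: consider the first time each of the original $N$ curves of $P$ has been destroyed (i.e. no longer present); these destruction times are distinct since each move destroys one curve, so they are at least $1, 2, \dots, N$ in some order — but more usefully, consider the curves of $T$ and the last time each is created. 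A cleaner route: track a single coordinate. Since any curve of $P$ has $d(\cdot, \cdot) \ge k$ from any curve of $T$, no curve can be simultaneously ``a $P$-curve'' and ``a $T$-curve,'' and disjointness forces a curve that is currently equal to some curve of $T$ to remain disjoint from (hence not equal to, and in fact at distance $\ge 1$ from, and with care $\ge k$ from) the $P$-curves. I would set up the bound by letting $t_j$ be the step at which the $j$-th curve of $T$ first appears in the path, show the $t_j$ are distinct, and show that between consecutive such ``arrivals'' at least $k-1$ extra moves are forced in the relevant slot because a curve cannot jump curve-complex distance $k$ in fewer than $k$ disjointness-respecting replacements. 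Assembling these gives at least $(k-1)(N-1) + (N-1) + 1 = k(N-1)+1$ moves, where the final $+1$ accounts for the first move and the $(N-1)$ for the last curve being slaved to the others.

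Alternatively, and perhaps more robustly, I would prove this by induction on $N$. For $N=1$ the statement reads $\dpw(P,T) \ge 1$, which holds since $P \ne T$ (as their single curves are at distance $\ge k \ge 1$). For the inductive step, take a shortest path and look at the first curve $c$ of $P$ that gets replaced, at some step $s$; before step $s$ nothing changes, so $s \ge 1$, and I would argue $s$ equals $1$ in an optimal path. After removing the ``world line'' of slot containing $c$ and projecting the remaining path into the dual curve complex of the subsurface complementary to... — this is delicate because pants decompositions don't restrict cleanly to subsurfaces. Instead I would use subsurface projections directly: project the whole path to $\mathcal{C}(Y)$ for each component $Y$ of $\Sigma$ cut along the $N-1$ curves that $P$ and $T$ eventually must each contain in complementary position, but that set is not canonical either.

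The main obstacle will be making the ``a curve cannot change its curve-complex distance by more than $1$ per dual move'' step do real work at the level of the whole decomposition rather than a single curve: when a dual move replaces $c_{i-1}$ by $c_i'$, the two are disjoint, so $d(c_{i-1}, c_i') \le 1$, and hence the multiset of curves changes ``slowly'' — but I must convert this into a lower bound involving $N-1$, and the right bookkeeping device is to assign to each $P_i$ the quantity $\sum_{j} \big(k - \min_{t \in T} d(a_j^{(i)}, t)\big)_+$ or a variant, and show it drops by at most $1$ per step while starting at $\ge k(N-1)+$something and ending at $0$. Pinning down the exact potential so that the additive constants come out to the sharp ``$k(N-1)+1$'' — in particular recovering $N-1$ and not $N$, which must come from the fact that once $N-1$ curves agree with $T$ the last one is determined up to the pants relation and is automatically close to $T$ — is the delicate accounting I expect to spend the most care on, and I would cross-check it against the geometric flipping proof of Proposition \ref{proposition general formula flipping}.
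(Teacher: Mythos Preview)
Your proposal sketches several approaches but does not complete any of them, and the most concrete step you rely on is false. You assert that ``a curve cannot jump curve-complex distance $k$ in fewer than $k$ disjointness-respecting replacements,'' but in a single edge of $\mathcal{C}^*(\Sigma)$ the replaced curve $c$ and the new curve $c'$ necessarily \emph{intersect}: if they were disjoint, the $N-1$ retained curves together with $c$ and $c'$ would form $N+1$ pairwise disjoint nonisotopic curves, contradicting maximality of a pants decomposition. Hence $d(c,c')\ge 2$ and can be arbitrarily large, so one dual move can carry the active slot any curve-complex distance. This collapses your potential-function argument: the summand $\min\bigl(k,\min_{t\in T} d(a,t)\bigr)$ for the moving slot can drop from $k$ to $0$ in a single step, so $\Psi$ can fall by $k$, and you recover only $m\ge N$ (the $k=1$ bound), not $m\ge k(N-1)+1$. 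Your alternative induction on $N$ you yourself abandon, correctly, because pants decompositions do not restrict to subsurfaces in a way that lets you peel off a slot.

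The paper's proof uses a different organizing principle that you do not mention: induction on $k$. The key observation is that the curves of any single pants decomposition are pairwise disjoint, hence at pairwise curve-complex distance at most $1$; so if \emph{one} curve of an intermediate decomposition $Q$ lies within curve-distance $k-1$ of some $x_P\in P$, then by the triangle inequality \emph{every} curve of $Q$ lies within curve-distance $k$ of that same $x_P$. Now take $Q$ on a geodesic from $P$ to $T$ at dual distance exactly $k(N-1)$ from $P$; the contrapositive of the inductive hypothesis for $k$ supplies such an $x_P$ and such a curve of $Q$, whence every curve of $Q$ is at curve-distance strictly less than $k+1$ from $x_P$ and therefore cannot be a curve of $T$. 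This forces $\dpw(Q,T)\ge N$, giving $\dpw(P,T)\ge k(N-1)+N=(k+1)(N-1)+1$. The ``$N-1$ rather than $N$'' and the ``$+1$'' you were trying to engineer with careful bookkeeping fall out automatically from this induction; no potential function or slot-tracking is needed.
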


\begin{proof} 
\textbf{Base case:} $k=1$. 
Suppose that for all $x_P\in P$ and for all $x_T\in T$,  
\[
d_C(x_P,x_T) \ge 1.
\]
In this case, curves of $P$ and $T$ are different. It takes at least $N$ steps to change from $P$ to $T$, hence
\[
\dpw (P,T) \geq N = 1\times(N-1)+1.
\]

\noindent
\textbf{Inductive step:} Assume that the statement of Proposition \ref{prop dC and dP} is true for some $k\in \mathbb{N}$. 
We will show that it is also true for $k+1$, that is,  
if for all $x_P\in P$ and for all $x_T\in T$, 
\[
d_C(x_P,x_T) \ge k+1
\]
then
\[
\dpw (P,T) \geq (k+1) (N -1) + 1. 
\]
Indeed, since for all $x_P\in P$ and for all $x_T\in T$,  
\[
d_C(x_P,x_T) \ge k+1 > k,
\]
by inductive hypothesis,
\[
\dpw (P,T) \geq k (N -1) + 1. 
\]
Let $Q$ be an arbitrary pants decomposition with 
\[
\dpw (P,Q) = k(N-1) 
< k (N -1) + 1.
\]
Using the contrapositive of the inductive hypothesis, 
there exists $x_P\in P$ and there exists $x_Q\in Q$ such that
\[
d_C(x_P,x_Q) < k.
\] 
Since every curve of $P$ is of distance at least $k+1$ apart for every curve of $T$, $x_Q$ cannot be a curve of $T$.
Similarly, for all $y_Q\in Q\setminus\{x_Q\}$, $y_Q$ cannot be a curve of $T$ either, because  
\[
d_C(x_P,y_Q)\le d_C(x_P,x_Q) + d_C(x_Q,y_Q) < k+1.
\]
Therefore, 
\[
\dpw (P,T) \geq 
k(N-1)+N
=
(k+1) (N -1) + 1. 
\]
By Principle of mathematical induction, we are done.
\end{proof} 

Furthermore, we also know how many times the curves of $P$ are \textit{flipped}. 
Here by \textit{flipping} a curve in a pants decomposition, we mean that it is replaced by another curve so that together with the other $N-1$ curves, they still form a pants decomposition.  
A curve is \textit{admissibly flipped $m$ times} if the resulting curve is of distance at least $m$ from it. 
From now on, we only consider admissible flips.  

\begin{lemma}\label{lemma k <= N+1}
Let $P$ and $T$ be pants decompositions. 
Let $k\in\mathbb{N}, 1\le k \le N+1$.
If for all $x_P\in P$ and for all $x_T\in T$,  
\[
d_C(x_P,x_T) \ge k
\]
then every curve of $P$ needs to flip at least $k-1$ times and 
there are at least $N-k+1$ curves such that each curve needs to flip at least $k$ times.
\end{lemma}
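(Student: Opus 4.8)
The plan is to prove this by induction on $k$, running essentially the same induction as in the proof of Theorem~\ref{prop dC and dP} but bookkeeping which curves get replaced. Fix a geodesic edge-path $P=P_0,P_1,\dots,P_L=T$ in the dual curve complex, so $L=\dpw(P,T)$ and each step replaces exactly one curve. This lets me speak of the \emph{lineage} of each of the $N$ curves of $P$: a sequence of curves along the path terminating at a curve of $T$. Write $m_i$ for the number of flips in the $i$-th lineage, so $\sum_{i=1}^N m_i=L$. The two assertions to prove become: $m_i\ge k-1$ for every $i$, and $m_i\ge k$ for at least $N-k+1$ indices $i$. Summing these recovers exactly the estimate $\dpw(P,T)\ge k(N-1)+1$ of Theorem~\ref{prop dC and dP}, so this lemma is a refinement of it.

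For $k=1$ the first assertion is vacuous and the second holds because $d_C\ge 1$ forces every curve of $P$ to differ from every curve of $T$, so each lineage flips at least once. For the inductive step, assume the lemma (hence also Theorem~\ref{prop dC and dP}, by summing) for $k$, and suppose $d_C(x_P,x_T)\ge k+1$ for all $x_P\in P$, $x_T\in T$, with $k+1\le N+1$. First I would record the elementary observation that a single flip moves the replaced curve a curve-complex distance of exactly $2$: both the old curve and the new one are essential curves in the complexity-$1$ subsurface cut off by the remaining $N-1$ curves, where distinct essential curves always intersect, yet both are disjoint from its essential boundary. Next, following the proof of Theorem~\ref{prop dC and dP}, let $s^\ast$ be the least index for which every curve of $P_{s^\ast}$ has curve-complex distance $\ge k$ from every curve of $P$; such $s^\ast$ exists since $s^\ast\le L$ works. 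The truncation $P_0,\dots,P_{s^\ast}$ is again a geodesic and $(P,P_{s^\ast})$ satisfies the lemma's hypothesis for the value $k$, so the inductive hypothesis gives that by time $s^\ast$ every lineage has flipped $\ge k-1$ times and at least $N-k+1$ lineages have flipped $\ge k$ times.

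It then suffices to show that the tail $P_{s^\ast},\dots,P_L$ contributes one more flip to the lineages that still need it, so as to reach the stated counts. The key claim is that a lineage cannot already sit at its terminal curve of $T$ at time $s^\ast$ while having been flipped only $k-1$ times: by minimality of $s^\ast$ some curve of $P_{s^\ast-1}$ lies within distance $k-1$ of $P$, and combining this with the facts that a single flip has curve-complex length exactly $2$ and that a curve of $P$ is never within distance $2$ of a curve of $T$ (using $k+1\ge 2$), one derives a contradiction with $d_C\ge k+1$ if such a lineage existed. Propagating this through then upgrades the bounds to $m_i\ge k$ for all $i$ and $m_i\ge k+1$ for at least $N-k$ lineages, closing the induction.

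The step I expect to be the real obstacle is precisely this quantitative control of how many flips one lineage requires. Because a single flip only moves a curve distance $2$ in $\mathcal C(\Sigma)$, the naive estimate "(distance travelled)$/2$" is already too weak to yield $m_i\ge k-1$ once $k\ge 4$, so the argument must instead exploit that \emph{all} of $P$ is far from \emph{all} of $T$: at every instant the other $N-1$ curves present are disjoint from the lineage's current curve, and since a single curve cannot be within distance $1$ of both a curve of $P$ and a curve of $T$, the "$P$-side" and "$T$-side" stretches of the path are forced apart. Carefully propagating this separation through the definition of $s^\ast$ — and through the curves that are \emph{not} flipped between consecutive checkpoints — is the delicate part; I would also need the mild point that working along a geodesic (equivalently, using only admissible flips) prevents a lineage from wastefully revisiting an earlier curve, so that lineages are essentially monotone.
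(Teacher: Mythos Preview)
Your approach diverges from the paper's, and the gap you flag is genuine. The paper does not fix a geodesic or introduce a checkpoint $s^\ast$. Instead, for the inductive step it considers an \emph{arbitrary} pants decomposition $Q$ obtained from $P$ by flipping $N-k$ curves $k$ times and the remaining $k$ curves $k-1$ times, applies the contrapositive of the inductive hypothesis to the pair $(P,Q)$ to obtain some $x_P\in P$, $x_Q\in Q$ with $d_C(x_P,x_Q)<k$, and then uses the one move you are missing: since any two curves of a single pants decomposition are disjoint, $d_C(x_Q,y_Q)\le 1$ for \emph{every} $y_Q\in Q$, so the triangle inequality gives $d_C(x_P,y_Q)<k+1$ for all of $Q$ simultaneously. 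Hence no curve of $Q$ lies in $T$, every curve must flip once more, and the induction closes without ever isolating a single lineage.

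In your framework this is exactly the step that fails for the lineage flipped at time $s^\ast$. Minimality of $s^\ast$ gives $y\in P_{s^\ast-1}$ with $d_C(x_P,y)\le k-1$; this does force every \emph{other} curve of $P_{s^\ast-1}$ (equivalently of $P_{s^\ast}\setminus\{y'\}$) to lie within $k$ of $x_P$ and hence outside $T$, but for the freshly flipped curve $y'$ you only get $d_C(x_P,y')\le (k-1)+2=k+1$, which is fully compatible with $y'\in T$. So nothing rules out that lineage landing in $T$ at step $s^\ast$ after only $k-1$ flips, and your fallback bound $2(k-1)$ on its curve-complex displacement is, as you suspected, too weak once $k\ge 3$. (Also a slip: ``never within distance $2$ of a curve of $T$ using $k+1\ge 2$'' should read $k+1\ge 3$.) The remedy is not finer lineage bookkeeping but the pants-decomposition triangle inequality above, applied to a well-chosen intermediate $Q$ rather than to a checkpoint along a fixed geodesic.
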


The contrapositive of the statement of Lemma \ref{lemma k <= N+1} is as follows.

\begin{quote} 
Let $k\in\mathbb{N}, 1\le k \le N+1$.
If there is a curve of $P$ that flips strictly less than $k-1$ times or there are strictly less than $N-k+1$ curves that each flip at least $k$ times, then there exist $x_P\in P$ and there exist $x_T\in T$ such that
\[
d_C (x_P, x_T) < k.
\] 
\end{quote}

\begin{proof}[Proof of Lemma \ref{lemma k <= N+1}]
\textbf{Base case:} $k=1$. 
Suppose that for all $x_P\in P$ and for all $x_T\in T$,  
\[
d_C(x_P,x_T) \ge 1.
\]
In this case, every curve of $P$ needs to flip at least once because otherwise, there is at least one curve that stays fixed, and the distance between it and itself is $0$. 

\smallskip 

\noindent
\textbf{Inductive step:} Assume that the statement of Lemma \ref{lemma k <= N+1} is true for some $k\in \mathbb{N}$, $1\le k \le N$. 
We will show that it is also true for $k+1$, that is,  

\begin{quote}
If for all $x_P\in P$ and for all $x_T\in T$,  
\[
d_C(x_P,x_T) \ge k+1
\]
then
every curve of $P$ needs to flip at least $k$ times to become a curve of $T$, and  among them 
there are at least $N-k$ curves of $P$ where each curve needs to flip at least $k+1$ times.    
\end{quote}

\noindent
By inductive hypothesis, every curve of $P$ needs to flip at least $k-1$ times and among them 
there are at least $N-k+1$ curves of $P$ such that each curve needs to flip $k$ times. 
Let $Q$ be \textit{any} pants decomposition obtained from $P$ 
by flipping $N-k$  curves each $k$ times, and flipping the $k$ other curves each $k-1$ times. 
Since there are strictly less than $N-k+1$ curves flipped at least $k$ times, using the contrapositive of the inductive hypothesis, 
there exists $x_P\in P$ and there exists $x_Q\in Q$ such that
\[
d_C(x_P,x_Q) < k.
\] 
Since every curve of $P$ is distance at least $k+1$ apart from every curve of $T$, $x_Q$ needs to flip one more time.
Similarly, for all $y_Q\in Q\setminus\{x_Q\}$, $y_Q$ needs to flip one more time, because  
\[
d_C(x_P,y_Q)\le d_C(x_P,x_Q) + d_C(x_Q,y_Q) < k+1.
\]
Therefore, every curve of $Q$ needs to flip one more time. Hence, every curve of $P$ needs to flip at least $k$ times and among them, at least $N-k$ curves each needs to flip $k+1$ times. 
\end{proof} 

In general, we have the following. 

\begin{proposition}\label{proposition general formula flipping}
Let $c\in \mathbb{N}, c\ge 1$. For all $k\in \mathbb{N}$ such that
\[
(c-1)N+2\le k \le cN+1,
\]
if each curve of $P$ has a distance at least $k$ from any curve of $T$ then 
every curve of $P$ flips at least $k-c$ times and at least $cN+1-k$ curves flips at least $k-c+1$ times.  
\end{proposition}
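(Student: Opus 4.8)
The plan is to prove the statement by induction on $c$. The case $c=1$ is exactly Lemma \ref{lemma k <= N+1} restricted to its subrange $2\le k\le N+1$, where its two conclusions read ``every curve of $P$ flips at least $k-1$ times'' and ``at least $N-k+1=N+1-k$ curves flip at least $k$ times''. Assuming the statement for some $c\ge 1$, I would prove it for $c+1$, that is, for $cN+2\le k\le (c+1)N+1$, by a second, inner induction on $k$ that replays the inductive step in the proof of Lemma \ref{lemma k <= N+1} with every flip-threshold shifted: within the block, the step from $k-1$ to $k$ is compared against the already established level-$(k-1)$ instance, while at the left endpoint $k=cN+2$ it is compared against the level-$(cN+1)$ instance supplied by the outer induction hypothesis.

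The recurring mechanism is the one used in Lemma \ref{lemma k <= N+1}. One fixes an \emph{arbitrary} pants decomposition $Q$ obtained from $P$ by carrying out the largest flip profile that the previous case does not already forbid --- concretely, flipping one fewer curve the ``strong'' number of times than that case demands, and the remaining curves the ``weak'' number of times. Since this profile flips strictly too few curves the strong amount, the contrapositive of the previous case produces $x_P\in P$ and $x_Q\in Q$ with $d_C(x_P,x_Q)$ below the relevant threshold. Because every curve of $T$ has curve-complex distance $\ge k$ from $x_P$, the curve $x_Q$ cannot be a curve of $T$, so it must be flipped at least once more; and since any two curves of the pants decomposition $Q$ satisfy $d_C\le 1$, the triangle inequality forces the same for \emph{every} curve of $Q$. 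Adding this one extra flip per curve to the flips already spent in reaching $Q$ yields exactly the two bounds ``every curve of $P$ flips at least $k-c$ times'' and ``at least $cN+1-k$ curves flip at least $k-c+1$ times''; a short check confirms that the counts coming out of the profile agree with the ones claimed.

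I expect the only genuinely delicate point to be the block boundary from $k=cN+1$ to $k=cN+2$, where $c$ and $k$ both increase by one, so the uniform bound $k-c=c(N-1)+1$ does not change, while a fresh count of ``strongly flipped'' curves --- the $N-1$ curves that must absorb an extra flip --- has to be created from a vacuous hypothesis, since the level-$(cN+1)$ case constrains zero curves from above. Here one cannot ``drop a curve from the strong batch''; instead one takes $Q$ to be $P$ with $N-1$ curves flipped $c(N-1)+1$ times and one curve left fixed, and checks that the contrapositive of the level-$(cN+1)$ case still applies, because that fixed curve is flipped fewer than $c(N-1)+1=(cN+1)-c$ times. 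Everything else is bookkeeping: verifying that $k-c$, $cN+1-k$ and the block endpoints $(c-1)N+2$ and $cN+1$ are compatible under $c\mapsto c+1$ so that the two inductions telescope, and phrasing the argument so that the conclusion is read off from \emph{all} auxiliary decompositions $Q$ of the prescribed profile rather than just one.
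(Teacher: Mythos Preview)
Your proposal follows exactly the paper's approach: outer induction on $c$ with the base case supplied by Lemma~\ref{lemma k <= N+1}, and the inductive step carried out by a further induction on $k$ that replays the mechanism of that lemma. The paper's own proof is deliberately terse --- after identifying the base case it simply writes ``This can be proved similarly as before by induction on $k$'' and leaves the details to the reader --- so you are supplying precisely the expansion the authors intend.

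One small correction at the block boundary $k=cN+2$. Your auxiliary profile --- $N-1$ curves flipped $c(N-1)+1$ times and one curve \emph{left fixed} --- does trigger the contrapositive of the level-$(cN+1)$ case, but after the forced ``one more flip per curve'' that last curve sits at~$1$, not at $c(N-1)+1$, so the uniform bound is not recovered from this $Q$ and the bookkeeping does not close. The profile that makes the step go through exactly as in Lemma~\ref{lemma k <= N+1} is: $N-1$ curves at $c(N-1)+1$ and one curve at $c(N-1)$. This still violates the ``every curve flips at least $c(N-1)+1$ times'' clause of level $cN+1$, so the contrapositive applies; and after the extra flip one reads off exactly the two bounds for level $cN+2$. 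With this adjustment the boundary step is no more delicate than any interior step --- it is the same pattern shifted by $c(N-1)$.
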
 
\noindent
Note that the condition $(c-1)N+2\le k \le cN+1$ guarantees that for any $c\ge1$, we always have $k\ge c$.

\begin{proof}
\textbf{Base case ($c=1$):} We need to show that
\begin{quote}
    For all $k\in \mathbb{N}$ such that
    \[
    2\le k \le N+1,
    \]
    if each curve of $P$ has a distance at least $k$ from any curve of $T$ then 
    every curve of $P$ flips at least $k-1$ times and at least $N+1-k$ curves flip at least $k$ times.   
\end{quote}
This is proved in Lemma \ref{lemma k <= N+1}.

\medskip

\noindent
\textbf{Inductive step:}
Assume that the statement of Proposition \ref{proposition general formula flipping} is true for some $c\in \mathbb{N}$, $c\ge 1$. 
We will show that it is also true for $c+1$, that is  

\begin{quote}
For all $k\in \mathbb{N}$ such that
\[
cN+2\le k \le (c+1)N+1,
\]
if each curve of $P$ has a distance at least $k$ from any curve of $T$ then 
every curve of $P$ flips at least $k-c-1$ times and at least $(c+1)N+1-k$ curves flip at least $k-c$ times.  
\end{quote}
This can be proved similarly as before by induction on $k$. 
We are done.  
\end{proof}

\begin{remark}
    Note that previously we only knew that $D(\Sigma)\ge d(\Sigma)$.
    In terms of $d(\Sigma)$ and $D(\Sigma)$ for a Heegaard splitting $(\Sigma, H_1, H_2)$, Theorem \ref{prop dC and dP} gives a better comparison, as it can be seen as
    \[
    D(\Sigma) \geq (N -1)d(\Sigma) + 1.
    \] 
    Indeed, since  
    \[
d(\Sigma) = \min\{d(u, u') \mid u \in H_1, u' \in H_2\}
\]  
and 
    \[
D(\Sigma) = \min\{\dpw (v, v') \mid v \text{ defines } H_1,\, v' \text{ defines } H_2\}.
\]  
Let $P, T$ be pants decompositions define $H_1, H_2$ such that 
$D(\Sigma) = \dpw (P,T)$.
Let
\[
k:=\min\{d(u, u') \mid u \in P, u' \in T\}
\ge d(\Sigma).
\]
Then
\[
D(\Sigma) = 
\dpw (P,T)\ge k (N -1) + 1 
\ge  (N -1) d(\Sigma) + 1.
\]
\end{remark}
 
\subsection{Distance relations between general simplicial complexes and curves complexes}\label{general complexes}
 The above bounds obtained on distances in the dual curve complex in terms of distances in curve complexes can be extended to other complexes such as the pants complex, Hatcher-Thurston cut system, etc. In particular, we can do it for any simplicial complex with which the following criterion is satisfied. We call such complexes admissible multi-curve complex.

\begin{enumerate}
  \item For some fixed $N$, each vertex is a collection of $N$ pairwise disjoint simple essential closed curves on the surface.
      \item Two vertices are connected by an edge if they differ by an \textit{admissible move}.
  \end{enumerate}

 \begin{definition}
   An admissible move consists of replacing one simple curve of a vertex by another curve (homotopically distinct) satisfying some conditions that include that the new curve must intersects the old one, is disjoint from the other existing curves, and after replacing we obtain a new vertex of the complex.
    
\end{definition}

 Note: The set of admissible moves can have other restrictions and can be a small set, like only 2 in case of pants graph. 

\begin{theorem}\label{general complex distance}
     Let $V_1$ and $V_2$ be two vertices of an admissible multi-curve complex.
 For all $k\ge1$, if each loop of $V_1$ has a distance at least $k$ from any loop of $V_2$ then 
 \[
 d_{cw} (V_1,V_2) \geq k (N -1) + 1.
\]

  where $d_{cw}$ is the distance in the 1-skeleton of the simplicial complex. 
 \end{theorem}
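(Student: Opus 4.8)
The plan is to prove the statement by induction on $k$, reusing essentially verbatim the argument of Theorem~\ref{prop dC and dP}; the point to check first is that that proof used only two features of pants decompositions, and both are guaranteed by the defining properties (1)--(2) of an admissible multi-curve complex. Namely: (i) every vertex is a collection of pairwise disjoint simple closed curves, so any two loops of a single vertex are at curve-graph distance at most $1$; and (ii) each edge corresponds to an admissible move, which replaces exactly one loop of a vertex and leaves the other $N-1$ fixed, so along any edge path the number of loops the current vertex shares (up to isotopy) with a fixed target vertex can increase by at most $1$ per step. The maximality of pants decompositions and the complexity-$1$ restriction on elementary moves are never invoked, so replacing $\dpw$ by $d_{cw}$ throughout the proof of Theorem~\ref{prop dC and dP} costs nothing.

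Granting (i)--(ii), I would argue as follows. For the base case $k=1$: the hypothesis $d_C(x,y)\ge 1$ for all $x\in V_1$, $y\in V_2$ says that $V_1$ and $V_2$ have no common loop, so by (ii) at least $N$ edges are needed to reach $V_2$ from $V_1$, i.e. $d_{cw}(V_1,V_2)\ge N = 1\cdot(N-1)+1$. For the inductive step, suppose the bound holds for $k$ and that every loop of $V_1$ is at curve-graph distance at least $k+1$ from every loop of $V_2$. Since $k+1>k$, the inductive hypothesis already gives $d_{cw}(V_1,V_2)\ge k(N-1)+1$, so some vertex $Q$ on a geodesic from $V_1$ to $V_2$ satisfies $d_{cw}(V_1,Q)=k(N-1)$. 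Applying the contrapositive of the inductive hypothesis to the pair $(V_1,Q)$ yields loops $x\in V_1$ and $y_0\in Q$ with $d_C(x,y_0)\le k-1$; then $y_0$ is not a loop of $V_2$, since $x$ is at distance at least $k+1$ from all of those. For every other loop $y\in Q$, property (i) gives $d_C(x,y)\le d_C(x,y_0)+d_C(y_0,y)\le (k-1)+1<k+1$, so $y$ is not a loop of $V_2$ either. Hence $Q$ shares no loop with $V_2$, so by (ii) again $d_{cw}(Q,V_2)\ge N$, and concatenating along the geodesic gives $d_{cw}(V_1,V_2)=d_{cw}(V_1,Q)+d_{cw}(Q,V_2)\ge k(N-1)+N=(k+1)(N-1)+1$.

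I do not expect a serious obstacle here: the content is entirely in the reduction to (i)--(ii), so the care goes into making sure the definition of an admissible move really does force each edge to change a single loop (in particular that one cannot simultaneously match two target loops in one move), which is precisely condition (2) together with the fact from condition (1) that a vertex is an unordered $N$-tuple of loops. I would also remark explicitly that connectivity of the $1$-skeleton is never used — if $V_1$ and $V_2$ lie in different components then $d_{cw}(V_1,V_2)=\infty$ and the inequality is vacuous — and that the geodesic vertex $Q$ at distance exactly $k(N-1)$ exists because $d_{cw}(V_1,V_2)\ge k(N-1)+1$ from the inductive hypothesis.
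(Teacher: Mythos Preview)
Your proposal is correct and follows exactly the approach the paper takes: the paper's proof of this theorem consists of two sentences observing that the induction argument of Theorem~\ref{prop dC and dP} goes through verbatim because it only uses that an admissible move produces a new curve disjoint from the remaining ones. You have isolated the same two properties (your (i) and (ii)) and written out the induction in full, with slightly more care about taking $Q$ on a geodesic and about the disconnected case than the paper itself provides.
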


 \begin{proof}
     The proof can be done via induction in an almost identical fashion as \ref{prop dC and dP}.
     Note that in the proof of \ref{prop dC and dP} we only use the fact that the image of a curve under an admissible move is disjoint from the existing curves and a move creates a new curve that's homotopically distinct.
 \end{proof}

\subsection{Relation between pants distances of filling pairs and curve complex}

We can impose additional restrictions on our vertex set and study the relations of specific subsets to the curve complex. In particular, we can ask if the lower bound can be improved in these cases. We also can ask when the vertices satisfy additional criteria, what conditions does this impose on our 3-manifolds in the settings of section \ref{Applications to 3-manifolds}.
In this section, we explore the effect of an additional condition on the vertices in the dual curve complex. We require the two vertices whose distance we want to compute to fill the surface. In this case, we get the following lower bound.

\begin{proposition} 
Let $S$ be a closed surface of genus $g\ge2$. 
    Let $P$ and $P'$ be two pants decompositions of $S$ and they fill $S$. Then
    \[
    \dpw(P,P')\ge 3g-3.
    \]
    Furthermore, 
given any closed surface of genus $g\ge2$, we can always find two pants decompositions on it that are distance $3g - 3$ apart. 
\end{proposition}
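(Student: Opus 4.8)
The plan is to prove the two claims separately. For the lower bound $\dpw(P,P')\ge 3g-3$, I would first recall that $N = 3g-3$ is the number of curves in a pants decomposition of a closed surface of genus $g$. The key observation is that if $P$ and $P'$ fill $S$, then in particular every curve of $P$ intersects every curve of $P'$ essentially, so $d_C(x_P, x_{P'}) \ge 2$ for all $x_P \in P$ and $x_{P'} \in P'$. Actually, filling gives something slightly stronger than pairwise intersection, but intersection already forces $d_C \ge 2$. Applying Theorem \ref{prop dC and dP} with $k = 2$ would then give $\dpw(P,P') \ge 2(N-1)+1 = 2N-1 = 6g-7$, which is \emph{stronger} than the claimed $3g-3$ for $g \ge 2$. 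So either the intended hypothesis is weaker (e.g. only requiring that $P$ and $P'$ have no common curve, giving $k=1$ and hence $\dpw \ge N = 3g-3$ directly from the base case of Theorem \ref{prop dC and dP}), or the proposition as stated is simply recording the clean bound $3g-3$ that follows immediately. I would present it as: since $P$ and $P'$ fill $S$ they share no curve, so $d_C(x_P,x_{P'}) \ge 1$ everywhere, and Theorem \ref{prop dC and dP} with $k=1$ yields $\dpw(P,P') \ge 1\cdot(N-1)+1 = N = 3g-3$.

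For the sharpness claim — that there exist two pants decompositions realizing distance exactly $3g-3$ — I would exhibit an explicit pair. Take $P = \{c_1,\dots,c_{3g-3}\}$ a standard pants decomposition, and construct $P'$ so that $P'$ fills $S$, shares no curve with $P$, yet can be reached from $P$ by a path of length exactly $3g-3$ in $\mathcal{C}^*(\Sigma)$. The upper bound $\dpw(P,P') \le 3g-3$ would come from building $P'$ one elementary replacement at a time: replace $c_1$ by a curve $c_1'$ disjoint from $c_2,\dots,c_{3g-3}$ and intersecting $c_1$, then replace $c_2$ by $c_2'$ disjoint from $c_1', c_3,\dots,c_{3g-3}$, and so on, choosing the $c_i'$ at each stage to be a "dual" curve in the relevant complexity-one subsurface. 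After $3g-3$ such moves we land at $P' = \{c_1',\dots,c_{3g-3}'\}$, giving $\dpw(P,P') \le 3g-3$; combined with the lower bound, we get equality. The content is in verifying that the $c_i'$ can be chosen simultaneously so that (a) each move is legal (the new curve is disjoint from all the others present at that step) and (b) the final configuration $P'$ genuinely fills $S$ — this is where I would need to be careful, e.g. by taking the $c_i'$ to be images of the $c_i$ under a suitable pseudo-Anosov or a carefully chosen product of Dehn twists supported on a filling collection of subsurfaces.

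The main obstacle is exactly step (b) of the sharpness argument: arranging that the $3g-3$ individual replacements can be performed in sequence (each one respecting disjointness from the curves currently in the decomposition) \emph{and} that the endpoint fills the surface. Filling is a global condition, while the admissibility of each elementary move is local, so one must choose the dual curves coherently. I would handle this by an explicit model — for instance, realize $S$ as a union of one-holed tori and four-holed spheres along the $c_i$, and inside each complexity-one piece perform the standard elementary move to a curve intersecting the original minimally; then argue that the resulting collection, together with its intersection pattern with $P$, fills $S$ by checking that the complement of $P \cup P'$ is a union of disks. The lower-bound half, by contrast, is essentially immediate from Theorem \ref{prop dC and dP}.
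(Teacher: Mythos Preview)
Your final argument for the lower bound --- filling implies $P$ and $P'$ share no curve, hence $k=1$ in Theorem~\ref{prop dC and dP}, hence $\dpw(P,P')\ge N=3g-3$ --- is correct and is exactly the paper's proof (stated there as a direct pigeonhole: if fewer than $3g-3$ moves were made, some curve $c$ lies in both $P$ and $P'$, and then nothing in $P\cup P'$ meets $c$, contradicting filling).

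However, the reasoning that precedes it contains a genuine misconception you should fix: it is \emph{not} true that ``if $P$ and $P'$ fill $S$ then every curve of $P$ intersects every curve of $P'$.'' Filling means the complement of $P\cup P'$ is a union of disks; this is a global condition on the union and does not force pairwise intersection of individual curves. In fact it cannot: the sharpness clause you are about to prove exhibits filling pairs at distance exactly $3g-3$, and in any such path each newly introduced curve is disjoint from the $3g-4$ other curves present at that step, so the resulting $P'$ necessarily has curves disjoint from curves of $P$ (i.e.\ $d_C=1$, not $\ge 2$). Your tentative bound $6g-7$ is therefore false, and your speculation that ``the intended hypothesis is weaker'' is misplaced --- filling is the intended hypothesis, it yields only $k=1$, and $3g-3$ is sharp. (The paper's Remark after the proposition makes the same point.)

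For the sharpness half, your plan --- build $P'$ by $3g-3$ successive elementary moves and then verify that $P\cup P'$ fills --- is the same strategy the paper uses; the paper carries it out via explicit pictures for $g=2,3$ and a concrete recipe for $g\ge 4$, rather than via a pseudo-Anosov or twist argument. You correctly identify verifying the filling condition as the only nontrivial step.
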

\begin{proof}
Since pants decompositions contain exactly $3g-3$ curves, via the pigeon-hole principle there is at least one of the original curves that is left unchanged, which means this is part of both pants decompositions. This means no other curve in either pants decomposition intersects it, which proves our collection is not filling. Thus, we get a contradiction.

We now show that $ 3g-3$ is the minimum possible distance: given any closed surface of genus $g\ge2$, we can always find two pants decompositions that fill it and are distance $3g - 3$ apart. 
 We begin by demonstrating this on the surface of genus $2$ and $3$, as in Figures \ref{figg2} and \ref{figg3}.

\begin{figure}[h]
    \centering \includegraphics[width=0.70\linewidth]{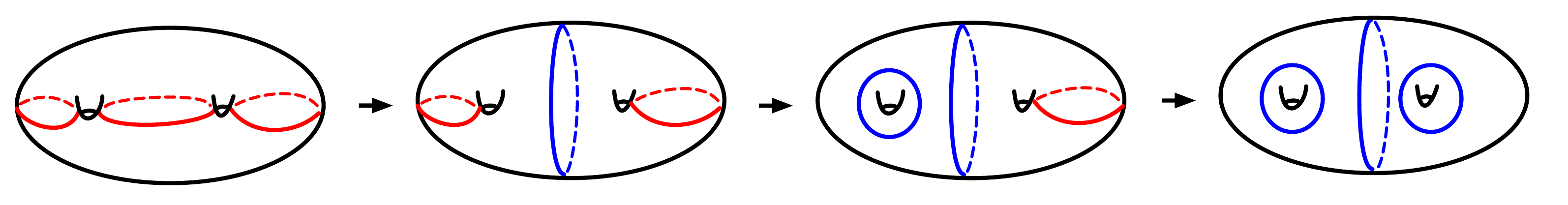}
    \caption{Distance $3$ on genus $2$ surface}
    \label{figg2}
\end{figure}

\begin{figure}[h]
    \centering \includegraphics[width=\linewidth]{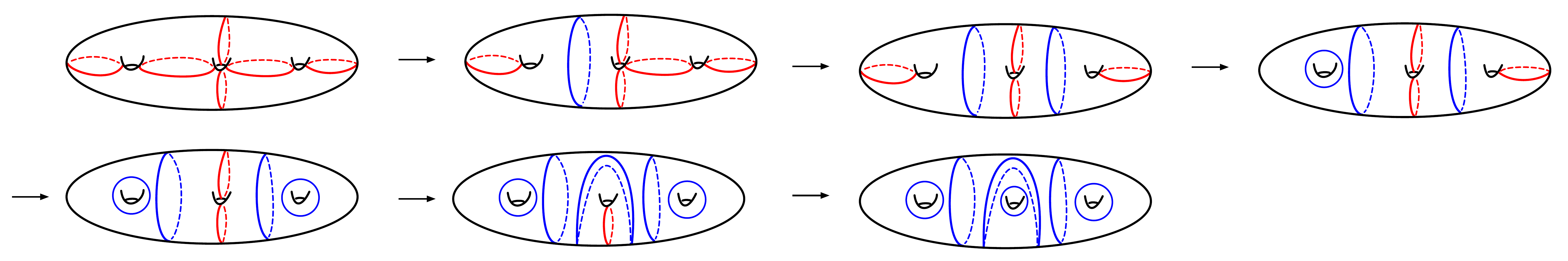}
    \caption{Distance $6$ on genus $3$ surface}
    \label{figg3}
\end{figure}

To extend our proof to other closed surfaces of genus $g\ge4$ the idea is to start with a pants decomposition that consists of a curve going between each pair of consecutive genera and two more around each genus (vertical ones) except for the genus on the two ends. 
Now for the moves, we start with the 
moves that take each (horizontal) curve between consecutive genera to a separating vertical one, 
then two moves on both ends, 
and then in each $X$-piece around the middle genera we do two more moves as in the genus 3 case. It is clear these are pants decompositions and two pants decompositions fill $S$. We have performed a move on each curve we began with so the distance is $3g -3$.
\end{proof}

\begin{remark}
Note that since
\[
\dpw(v, v') \leq \dpants(v, v'),
\]
we also have
\[
\dpants(P, P')\ge 3g-3.
\]
for any two pants decompositions $P$ and $P'$ that fill $S$. 
\end{remark}

\begin{lemma}

Given a pants decomposition $P$ on any closed surface, we can construct another pants decomposition  $P'$ such that $P$ and $P'$ fill and $d(P, P') = 3g -3 $.
\end{lemma}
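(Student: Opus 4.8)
The plan is to give an explicit construction that refines the existence argument in the preceding proposition so that it starts from an \emph{arbitrary} pants decomposition $P$, rather than from the particular "standard" one used there. The key observation is that the argument bounding $\dpw(P,P')$ from below by $3g-3$ (pigeonhole: if fewer than $3g-3$ curves are flipped, some curve of $P$ survives into $P'$ and is disjoint from everything, so $P\cup P'$ cannot fill) applies to any pair of filling pants decompositions, so the only thing to produce is a $P'$ that (a) fills together with $P$ and (b) is reachable from $P$ by exactly one admissible flip on each of the $3g-3$ curves. Condition (b) forces $\dpw(P,P')\le 3g-3$, and condition (a) together with the lower bound forces equality.

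First I would recall that cutting $S$ along $P$ yields $2g-2$ pairs of pants glued along the $3g-3$ curves, and that each curve $c_i\in P$ lies in a complexity-one subsurface $Y_i$ (a four-holed sphere or one-holed torus) in which an elementary move replacing $c_i$ by a curve $c_i'$ with $i(c_i,c_i')\ge 2$ (for a torus) or $=2$ (for a sphere) is available. The construction proceeds curve by curve: choose for each $i$ a flip $c_i\mapsto c_i'$ inside $Y_i$ so that the new curves are pairwise disjoint (they automatically are, since $c_i'\subseteq Y_i$ and the $Y_i$ overlap only in annular neighborhoods of the $c_j$, and one can arrange $c_i'$ to miss those annuli) and so that $P' = \{c_1',\dots,c_{3g-3}'\}$ is again a pants decomposition. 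Performing these $3g-3$ elementary moves one at a time exhibits an edge path of length $3g-3$ from $P$ to $P'$ in $\mathcal{C}^*(\Sigma)$, giving $\dpw(P,P')\le 3g-3$.

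The main obstacle — and the step deserving the most care — is verifying that the $c_i'$ can be chosen so that $P\cup P'$ actually \emph{fills} $S$, i.e. every component of $S\setminus(P\cup P')$ is a disk. Since each $c_i'$ crosses $c_i$, no curve of $P$ survives, so the usual obstruction to filling is absent, but one still must rule out annular or larger complementary regions. I would handle this by arguing locally in each pants piece: after the flips, each original pair of pants is subdivided by the arcs $c_i'\cap(\text{pants piece})$ into subsurfaces, and choosing the $c_i'$ to be "generic" curves in their $Y_i$ (for instance, in a one-holed torus $Y_i$ take $c_i'$ to be the curve of slope making it intersect both other boundary-parallel structures; in a four-holed sphere take the curve separating the boundary components the other way) makes every complementary region a disk. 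A clean way to package this is to first treat the genus-$2$ and genus-$3$ cases by the explicit pictures already in the paper (Figures \ref{figg2} and \ref{figg3}), observe that those flips can be performed starting from any $P$ because complexity-one subsurfaces always admit the needed move, and then do the genus-$g\ge4$ case by the same "one flip per curve, localized in the $X$-pieces around the middle genera plus two at each end" recipe, now carried out relative to the arbitrary $P$. Finally, combining $\dpw(P,P')\le 3g-3$ with the lower bound $\dpw(P,P')\ge 3g-3$ from the proposition (valid since $P,P'$ fill) yields $d(P,P')=3g-3$, completing the proof.
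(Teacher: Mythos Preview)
Your approach is genuinely different from the paper's, and as written it contains a real gap.

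The concrete error is the sentence ``the $Y_i$ overlap only in annular neighborhoods of the $c_j$, and one can arrange $c_i'$ to miss those annuli.'' This is false. If $c_i$ and $c_j$ are two of the three boundary curves of a single pair of pants $Q$ in $P$, then both $Y_i$ and $Y_j$ contain the \emph{entire} pants $Q$, not just an annulus. Any essential $c_i'\subset Y_i$ replacing $c_i$ must cross $c_i$ and hence has essential arcs in $Q$; likewise for $c_j'$. These arcs need not be disjoint, so the pairwise disjointness of the $c_i'$ is a genuine constraint you have not verified. (Already in genus $2$ with the ``theta'' dual graph, all three curves bound both pairs of pants, and one can check that three simultaneous replacements $c_i'\subset Y_i$ cannot be made pairwise disjoint.) Your fallback of invoking Figures~\ref{figg2} and~\ref{figg3} and asserting that ``those flips can be performed starting from any $P$'' does not rescue the argument: those figures use one particular combinatorial type of pants decomposition, whereas an arbitrary $P$ can have a different trivalent dual graph (in genus $2$ there are already two types), so the phrase ``the same recipe, now carried out relative to the arbitrary $P$'' has no clear meaning.

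The paper sidesteps all of this with a global construction that is insensitive to the combinatorial type of $P$: it embeds the dual trivalent graph of $P$ in $S$ (one tripod in each pair of pants, glued across the curves of $P$), takes the boundary curves of an $\varepsilon$-neighborhood of this graph, and extends that collection to a pants decomposition $P'$. Because the tripod spine meets every curve of $P$ and cuts each pair of pants into a disk, the filling property and the curve count fall out of the construction rather than from a delicate curve-by-curve analysis.
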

\begin{proof}

The pants decomposition $P'$ can be constructed as follows. We first get the dual graph of $P$: each vertex corresponds to a pants, and each vertex gives one tripod (see Figure \ref{fig:from P construct P'}). From this dual graph of $P$, we can form a set of disjoint simple close curves on $S$ by taking an $\varepsilon$-neighborhood of the dual graph. Then extend this set of curves to a pant decomposition which we choose it to be $P'$.

\begin{figure}
    \centering
    \includegraphics[width=0.35\linewidth]{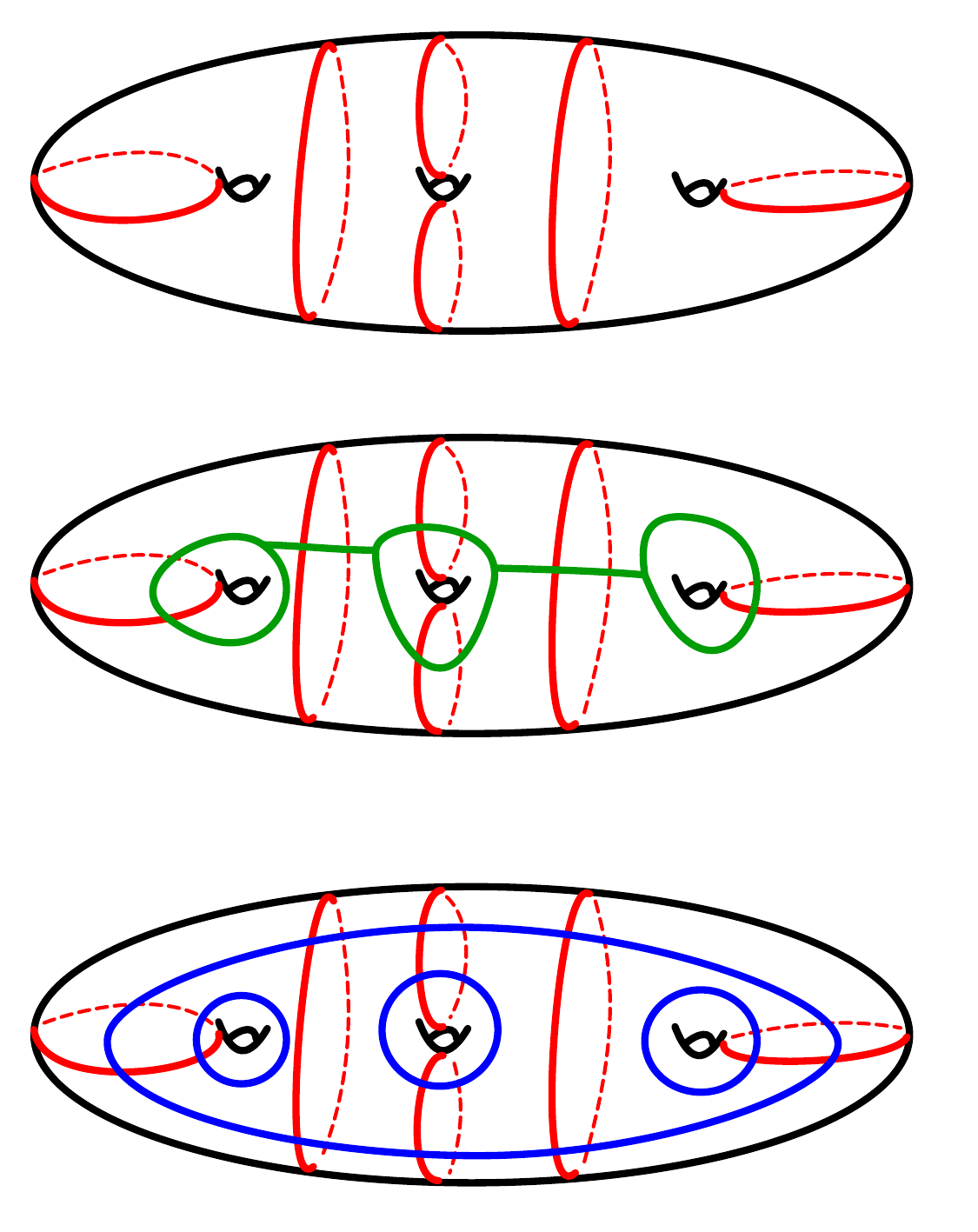}
    \caption{Given $P$, constructing $P'$ such that $P$ and $P'$ fill $S$.}
    \label{fig:from P construct P'}
\end{figure}
\end{proof}

\begin{remark}
    Note, this is the best possible lower bound since it is realized. Furthermore, this corresponds to $k=1$ in \ref{prop dC and dP}, so any pair $P, P'$ that realizes the lower bound, for example as in Remark 4.3, must have $k = 1$.
\end{remark}

\section{Applications to 3-manifolds}\label{Applications to 3-manifolds}

Hempel first introduced the idea of using curve complex to study complexity of 3-manifolds via Heegard splitting. Over the last few decades this approach of using 2-dimensional complexes associated with the splitting surface to study complexities has been extended to include other complexes like the dual curve complexity and pants graph complexity $
A_g(\Sigma)=D\left(\Sigma_g\right)+b-g \text { and } A_g^P(\Sigma)=D^P\left(\Sigma_g\right)+b-g
$ defined using dual curve complex and pants complex, respectively and obtaining stable invariants for 3 manifolds.
Here \( b \) is the sum of the genera of the boundary components of the manifold.

Following this approach,  we use the Hatcher-Thurston cut system to define a complexity for 3-manifold and the dual curve complex to define a complexity for handlebody-knots. These are done along the lines of similar invariants defined by Jesse Johnson using pants and curve complexes.

More generally, we remark that this kind of complexity may be defined using other simplicial complexes; however, we need an additional condition that $k \neq 0$ to get lower bounds using results from Section \ref{section relation distance}. 
In the following section, we provide bounds independent of $k$.

\subsection{Heegaard splittings and Hatcher-Thurston cut-systems}\label{Hatcher-Thurston} 

We consider a closed 3-manifold and a genus $g$ Heegard splitting.
In this case, each vertex is a cut system. Two vertices are connected by an edge if the two corresponding systems differ by one curve, where the old curve intersects the new curve once. 

Let $n$ be the number of $S^1\times S^2$ components of the prime decomposition of $M$. Let $\Sigma = \Sigma_g$ be a splitting surface of genus $g$. Note that $n$ is well-defined and finite. Let $A^{HT}_g(\Sigma) = D^{HT}(\Sigma_g)-g-n$ be the Hatcher-Thurston cut system complexity. 
\begin{theorem}
    The limit $\lim_{g\rightarrow\infty} A^{HT}_g(\Sigma)$ exists and is a 3-manifold invariant.
\end{theorem}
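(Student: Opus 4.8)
The plan is to imitate Johnson's strategy for the pants and dual curve complexities \cite{johnson2006heegaard}: show that, along any sequence of stabilizations of a fixed Heegaard splitting, the sequence $A^{HT}_g(\Sigma)=D^{HT}(\Sigma_g)-g-n$ is non-increasing and bounded below, hence convergent, and then use the Reidemeister--Singer theorem (any two Heegaard splittings of $M$ have a common stabilization) together with uniqueness of stabilization to conclude the limit is independent of the starting splitting. Note first that $D^{HT}(\Sigma_g)$ is finite because the Hatcher--Thurston complex is connected \cite{hatcher1980presentation}, and $n$ is a genuine invariant of $M$ by uniqueness of prime decomposition.

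The first step is the upper bound $D^{HT}(\Sigma_{g+1})\le D^{HT}(\Sigma_g)+1$ for a single stabilization. Write $\Sigma_{g+1}=\Sigma_g\#T$, where $T$ is the new handle carrying a dual pair $\alpha,\beta$ with $|\alpha\cap\beta|=1$, $\alpha$ bounding a disk in the stabilized $H_1$ and $\beta$ in the stabilized $H_2$, both disjoint from $\Sigma_g$. If $v$ defines $H_1$ and $v'$ defines $H_2$ with $d_{HT}(v,v')=D^{HT}(\Sigma_g)$, realized by a path $v=v_0,\dots,v_\ell=v'$, then each $v_i\cup\{\alpha\}$ is a cut system of $\Sigma_{g+1}$ (cutting along $\alpha$ yields $\Sigma_g$ with two extra punctures), each original simple move remains a simple move after adjoining $\alpha$ since $\alpha\subset T$ is disjoint from $\Sigma_g$, and one final simple move replaces $\alpha$ by $\beta$. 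Thus $v\cup\{\alpha\}$ and $v'\cup\{\beta\}$ define the two stabilized handlebodies and are joined by a path of length $D^{HT}(\Sigma_g)+1$, so $A^{HT}_{g+1}(\Sigma)\le A^{HT}_g(\Sigma)$.

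The lower bound, which I expect to be the crux, is $D^{HT}(\Sigma_g)\ge g-n$, giving the uniform estimate $A^{HT}_g(\Sigma)\ge -2n$. Given $v$ defining $H_1$, $v'$ defining $H_2$, and a geodesic between them of length $\ell$, each simple move alters exactly one curve, so $\ell\ge g-m$ where $m=|v\cap v'|$. The key claim is $m\le n$: every curve $c$ in $v\cap v'$ bounds a disk in $H_1$ and a disk in $H_2$, and since $v\cap v'$ is a sub-collection of the cut systems $v$ and $v'$ these disks may be taken mutually disjoint inside $H_1$ and inside $H_2$; the resulting $m$ disjoint $2$-spheres in $M$ are non-separating and independent (their boundary curves lie in a cut system of $\Sigma_g$, hence are homologically independent), so by Kneser--Milnor $M$ splits off at least $m$ copies of $S^1\times S^2$, forcing $m\le n$. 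Hence $D^{HT}(\Sigma_g)=\min d_{HT}(v,v')\ge g-m\ge g-n$. The careful verification that $m$ disjoint, independent, non-separating spheres produce $m$ distinct $S^1\times S^2$ prime summands is the technical heart of this step; it is what forces the normalizing constant in the definition of $A^{HT}_g$.

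Finally I would assemble the pieces. By the two inequalities, for any Heegaard splitting $\Sigma=\Sigma_{g_0}$ of $M$ the sequence $\{A^{HT}_g(\Sigma)\}_{g\ge g_0}$ obtained by iterated stabilization is non-increasing and bounded below by $-2n$, so $L(\Sigma):=\lim_{g\to\infty}A^{HT}_g(\Sigma)=\inf_{g\ge g_0}A^{HT}_g(\Sigma)$ exists. If $\Sigma$ and $\Sigma'$ are two Heegaard splittings of $M$, Reidemeister--Singer gives a common stabilization $\hat\Sigma$; by uniqueness of stabilization $\hat\Sigma$ appears in both stabilization sequences, and since a tail of each sequence is cofinal we get $L(\Sigma)=L(\hat\Sigma)=L(\Sigma')$. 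Therefore $L$ depends only on $M$, so $\lim_{g\to\infty}A^{HT}_g(\Sigma)$ is a $3$-manifold invariant.
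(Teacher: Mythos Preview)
Your proposal is correct and follows essentially the same route as the paper: both arguments imitate Johnson's template by proving (i) $D^{HT}(\Sigma_{g+1})\le D^{HT}(\Sigma_g)+1$ via adjoining the new meridian to an optimal path and then swapping it once, and (ii) a lower bound $D^{HT}(\Sigma_g)\ge g-n$ coming from the observation that any curve common to the two cut systems produces a reducing sphere and hence an $S^1\times S^2$ summand, so that $A^{HT}_g$ is non-increasing and bounded below. The paper streamlines by assuming $n=0$ and citing Johnson's Lemma~8 verbatim for the bound $D^{HT}(\Sigma)\ge g$, whereas you carry the general $n$ through and are more explicit about invoking Reidemeister--Singer for the independence of the limit from the initial splitting; these are expository differences rather than mathematical ones.
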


The proof is similar to Johnson's proof of Theorem 17 in \cite{johnson2006heegaard}, for the dual curve complex and follows from the following sequence of lemmas. Let's assume $n=0$ for simplicity.

\begin{lemma}
 $$D^{HT}(\Sigma) \geq g.$$

\end{lemma}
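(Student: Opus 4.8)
The plan is to bound $D^{HT}(\Sigma)$ below by the minimal curve-complex distance $k$ between curves bounding disks in $H_1$ and curves bounding disks in $H_2$, then invoke Theorem~\ref{general complex distance}. First I would verify that the Hatcher-Thurston cut system complex is an admissible multi-curve complex in the sense of Section~\ref{general complexes}: each vertex is an unordered collection of $N=g$ pairwise disjoint essential simple closed curves (the complement being a $2g$-punctured sphere, so the curves are disjoint and essential), and a simple move replaces one curve $C_i$ by a curve $C_i'$ meeting $C_i$ once and disjoint from all $C_j$, $j\neq i$ — so $C_i'$ intersects the curve it replaces, is disjoint from the others, and is homotopically distinct, exactly the defining properties of an admissible move. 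Hence Theorem~\ref{general complex distance} applies with $N=g$, giving $d_{HT}(V_1,V_2)\geq k(g-1)+1$ whenever every curve of $V_1$ is curve-complex distance at least $k$ from every curve of $V_2$.

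The next step is to control $k$ for cut systems that define the two handlebodies. If $v$ defines $H_1$ and $v'$ defines $H_2$, each curve of $v$ bounds a disk in $H_1$ and each curve of $v'$ bounds a disk in $H_2$. Since $M$ has no $S^1\times S^2$ prime summands (we are assuming $n=0$), the Heegaard splitting is irreducible, so no curve bounding a disk in $H_1$ is isotopic to a curve bounding a disk in $H_2$; equivalently the Hempel distance $d(\Sigma)\geq 1$. Therefore for any such $v,v'$ we have $k:=\min\{d(u,u')\mid u\in v,\,u'\in v'\}\geq d(\Sigma)\geq 1$. Feeding $k\geq 1$ into the bound above yields $d_{HT}(v,v')\geq 1\cdot(g-1)+1=g$, and taking the minimum over all $v$ defining $H_1$ and $v'$ defining $H_2$ gives $D^{HT}(\Sigma)\geq g$, as desired.

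The main obstacle I anticipate is the step asserting $d(\Sigma)\geq 1$ from $n=0$: one must be careful that "no $S^1\times S^2$ summand" is precisely what rules out a reducible splitting (Haken's lemma / the fact that a reducible Heegaard splitting of an irreducible manifold is stabilized is not quite what is needed — rather one uses that $d(\Sigma)=0$ forces a reducing sphere, hence an $S^1\times S^2$ connected summand or a non-minimal-genus phenomenon). If one wants to avoid this subtlety entirely, an alternative is to argue directly: any curve bounding a disk in $H_1$ and any curve bounding a disk in $H_2$ are distinct isotopy classes (two distinct cut systems, one for each handlebody, share no curve when $n=0$), which already gives $k\geq 1$ without naming the Hempel distance, and the same chain of inequalities goes through. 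I would also double-check the degenerate small-genus cases (e.g. $g=1$, where the bound reads $D^{HT}\geq 1$) to make sure the statement is vacuous or trivially true there rather than false.
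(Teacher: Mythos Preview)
Your alternative route is the correct one, and it coincides with the paper's approach. The paper simply cites Johnson's Lemma~8 and observes that the argument there---which in the pants setting uses only a sub-collection of $g$ curves---applies verbatim to Hatcher--Thurston cut systems because each vertex already consists of exactly $g$ such curves. Unwound, that argument is precisely your base case $k=1$ of Theorem~\ref{general complex distance}: if fewer than $g$ simple moves take $v$ to $v'$, some curve $\gamma$ survives, so $\gamma$ bounds disks in both $H_1$ and $H_2$; since any curve in a cut system is non-separating (its complement in $\Sigma$ is connected), the resulting sphere is non-separating and produces an $S^1\times S^2$ summand, contradicting $n=0$.

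Your detour through the Hempel distance does not go through as stated, and you are right to flag it: $n=0$ does \emph{not} force $d(\Sigma)\geq 1$ in general, because a \emph{separating} essential curve can bound disks on both sides (e.g.\ a genus-$2$ splitting of $L(p,q)\#L(p',q')$ has $d(\Sigma)=0$ but no $S^1\times S^2$ summand). The crucial extra input---which you should make explicit rather than leave implicit---is that cut-system curves are automatically non-separating, so a common curve between $v$ and $v'$ yields a non-separating reducing sphere. With that observation your $k\geq 1$ claim holds for the curves actually appearing in the cut systems (even though it may fail for arbitrary disk-bounding curves), and the application of Theorem~\ref{general complex distance} with $N=g$, $k=1$ gives $D^{HT}(\Sigma)\geq g$ exactly as in the paper.
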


\begin{proof}
    The proof is identical to the proof for dual curve distance in Lemma 8 in \cite{johnson2006heegaard}. The proof only uses a sub-collection of $g$ curves to obtain a contradiction, as in the Hatcher-Thurston cut system each vertex consists of $g$ curves satisfying the same condition, we get the required bound.
\end{proof}

\begin{lemma}
    The sequence $A^{HT}_g(\Sigma)$ is non-increasing.
\end{lemma}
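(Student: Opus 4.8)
The plan is to show that stabilizing a Heegaard splitting of genus $g$ to genus $g+1$ increases $D^{HT}$ by at most $1$, so that $A^{HT}_{g+1}(\Sigma) = D^{HT}(\Sigma_{g+1}) - (g+1) \le D^{HT}(\Sigma_g) + 1 - (g+1) = A^{HT}_g(\Sigma)$, and iterating gives the monotonicity along any tower of stabilizations. The essential content is therefore: \emph{a single stabilization raises the minimal cut-system distance by at most one.}

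First I would set up notation. Let $(\Sigma_g, H_1, H_2)$ be a Heegaard splitting, and let $v$ be a cut system defining $H_1$ and $v'$ a cut system defining $H_2$ with $d_{HT}(v,v') = D^{HT}(\Sigma_g)$. A stabilization produces $(\Sigma_{g+1}, H_1', H_2')$ where $\Sigma_{g+1}$ is obtained from $\Sigma_g$ by attaching a standard handle carrying a dual pair of curves $\alpha, \beta$ with $|\alpha \cap \beta| = 1$; here $\alpha$ bounds a disk in $H_1'$ and $\beta$ bounds a disk in $H_2'$ (this is the standard genus-one summand picture). Then $v \cup \{\alpha\}$ is a cut system defining $H_1'$ and $v' \cup \{\beta\}$ is a cut system defining $H_2'$, so $D^{HT}(\Sigma_{g+1}) \le d_{HT}(v \cup \{\alpha\}, v' \cup \{\beta\})$.

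Next I would exhibit an explicit path in the Hatcher-Thurston complex of $\Sigma_{g+1}$ from $v \cup \{\alpha\}$ to $v' \cup \{\beta\}$ of length $D^{HT}(\Sigma_g) + 1$. The geodesic realizing $d_{HT}(v,v')$ on $\Sigma_g$ is a sequence of simple moves; each such move changes one curve $C_i$ to $C_i'$ intersecting it once and disjoint from the rest, and the same move is still a legal simple move on $\Sigma_{g+1}$ since the new curve can be isotoped off the added handle and remains disjoint from $\alpha$. So we get a path of length $D^{HT}(\Sigma_g)$ from $v \cup \{\alpha\}$ to $v' \cup \{\alpha\}$. Finally one simple move replaces $\alpha$ by $\beta$ (they meet once and $\beta$ is disjoint from all of $v'$ because the handle is attached along a disk in the complement of $v'$), giving the last step. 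Hence $D^{HT}(\Sigma_{g+1}) \le D^{HT}(\Sigma_g) + 1$.

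The step I expect to be the main obstacle is the careful verification that the ``extension of moves across the added handle'' is legitimate in the Hatcher-Thurston setting: one must check that replacement curves coming from moves on $\Sigma_g$ remain disjoint from $\alpha$ (and later from $\beta$), and that at each stage the collection is genuinely a cut system on $\Sigma_{g+1}$, i.e. its complement is a $2(g+1)$-punctured sphere rather than some other surface. Since the added handle together with $\alpha$ contributes exactly the extra genus and cutting along $\alpha$ reduces it back, this should hold, but it requires being precise about isotoping the curves of the moving path into $\Sigma_g \setminus (\text{disk})$ and about the standard form of the stabilization. Once that is in place, combined with the preceding lemma $D^{HT}(\Sigma_g) \ge g$ (which bounds the sequence below), the non-increasing integer sequence $A^{HT}_g(\Sigma)$ is eventually constant, which is what is needed for Theorem C.
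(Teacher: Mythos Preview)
Your proposal is correct and follows essentially the same approach as the paper: extend a geodesic in $C^{HT}(\Sigma_g)$ realizing $D^{HT}(\Sigma_g)$ by appending the new meridian $\alpha$ of the stabilization handle to every vertex, then perform one extra simple move swapping $\alpha$ for its dual $\beta$, yielding $D^{HT}(\Sigma_{g+1}) \le D^{HT}(\Sigma_g)+1$. The paper's proof is terser (it points to a figure for the red-to-blue swap), while you spell out the verification that lifted moves remain legitimate cut-system moves on $\Sigma_{g+1}$, but the content is the same.
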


\begin{proof}
    There are $g$ curves in a cut system. For each stabilization, we increase the genus by one, and we can choose a new curve to be the red curve as shown in Figure \ref{staba}. Swapping the red curve to the blue curve bounds a disk in the other handlebody. This implies that $D^{HT}(\Sigma_{g+1})\leq D^{HT}(\Sigma_{g}+1).$ Therefore, $A^{HT}_{g+1}(\Sigma) = D^{HT}(\Sigma_{g+1})-(g+1)\leq D^{HT}(\Sigma_{g})+1-(g+1)=A^{HT}_g(\Sigma).$
\end{proof}

\begin{figure}[ht!]
\centering
\includegraphics[width=0.4\linewidth]{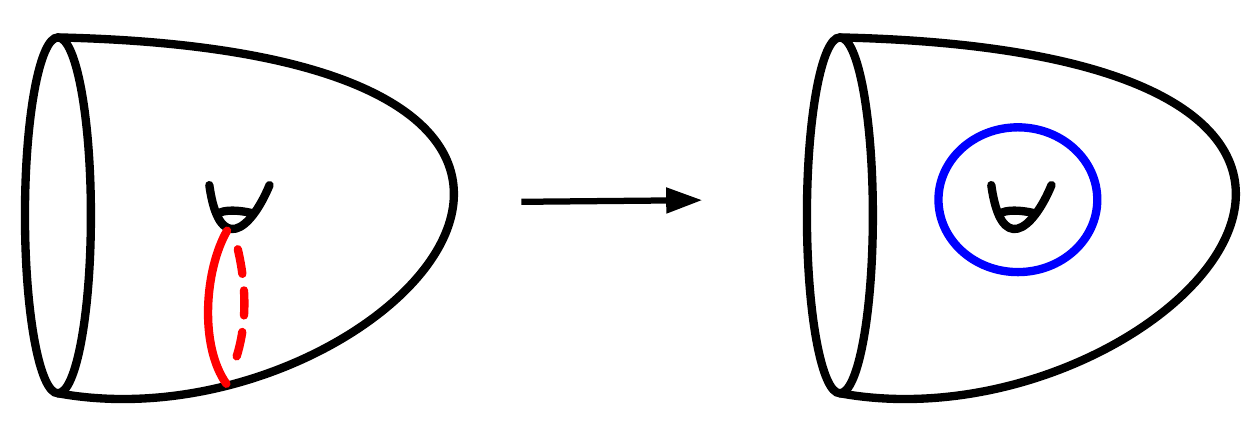}\label{staba}
\caption{A simple move}
\end{figure}

\begin{figure}[ht!]
\centering
\includegraphics[width=0.9\linewidth]{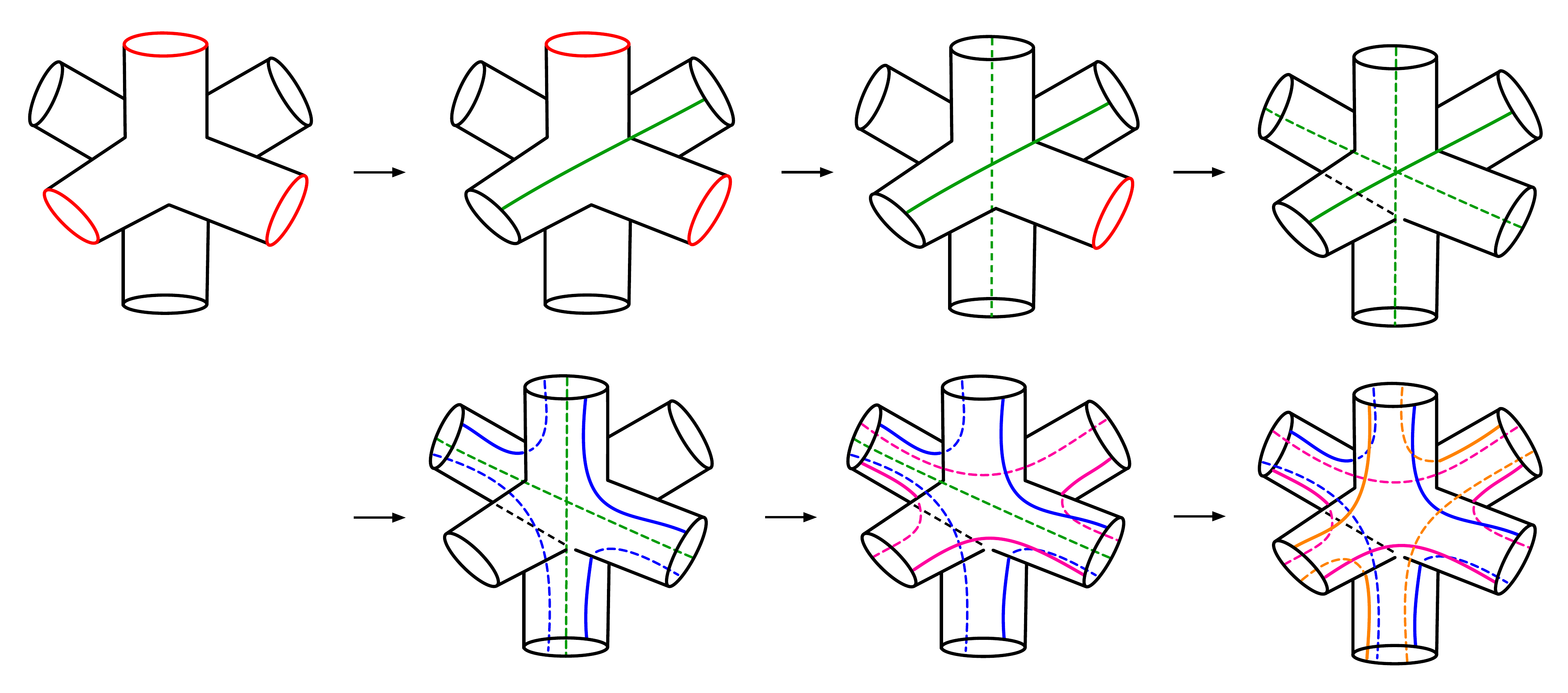}\label{3torus}
\caption{The Hatcher-Thurston distance of the 3-torus is 6}
\end{figure}

\begin{example}
It is well known that the Heegaard genus of the 3‑torus $T^3$ is three.  In fact, Boileau and Otal proved that the Heegaard splitting of any genus $g\geq 3$ is unique \cite{boileau1990scindements}.  That is, it is obtained by stabilizing the minimal‑genus splitting.  The Hatcher–Thurston distance $D^{HT}(\Sigma_3)$ of the 3‑torus is $6$.  Figure~\ref{3torus} shows a path of length~$6$, giving the upper bound.  To see the matching lower bound, observe that each curve must move at least once.  If not, there would be an $S^1\times S^2$ summand, contradicting the fact that the 3‑torus is prime.  In fact, each curve must move at least twice.  To see this, suppose some curve moves only once.  Then there exist curves $\alpha$ bounding a disk in $H_1$ and $\beta$ bounding a disk in $H_2$ such that $|\alpha\cap\beta|=1$.  This implies that the Heegaard splitting is stabilized, so the 3‑torus would admit a splitting of genus strictly less than~$3$, which is a contradiction.  Therefore,
\[
A_3(\Sigma) \;=\; 6 - 3 \;=\; 3.
\]

Now, for a genus $g>3$ Heegaard splitting of $T^3$, the same reasoning shows that
\[
A_g(\Sigma) \;=\; g + 3 - g \;=\; 3.
\]
Indeed, consider a cut system with $g$ curves.  Each curve must move at least once because $T^3$ has no $S^1\times S^2$ summand.  If all curves moved only once, we could sequentially destabilize and obtain a 3‑manifold homeomorphic to $S^3$.  Thus, at least three curves must move twice.

\end{example}

\subsection{Applications to handlebody-knots in 3-manifolds}\label{section handle body links}

Ozawa in \cite[Theorem 3.2]{ozawa2021stable} proved a version of stable equivalence for bridge positions of handlebody-knots. In this section, we define a complexity for handlebody-knots in the spirit of Hempel \cite{MR1838999} and Johnson\cite{johnson2006heegaard} and using it an invariant for handlebody-knots. We show that it converges under the stabilization moves in \cite{ozawa2021stable}.

\begin{figure}[h]
    \centering
    \begin{tikzpicture}
        \node[anchor=south west, inner sep=0] (image) at (0,0) {\includegraphics[width=0.6\linewidth]{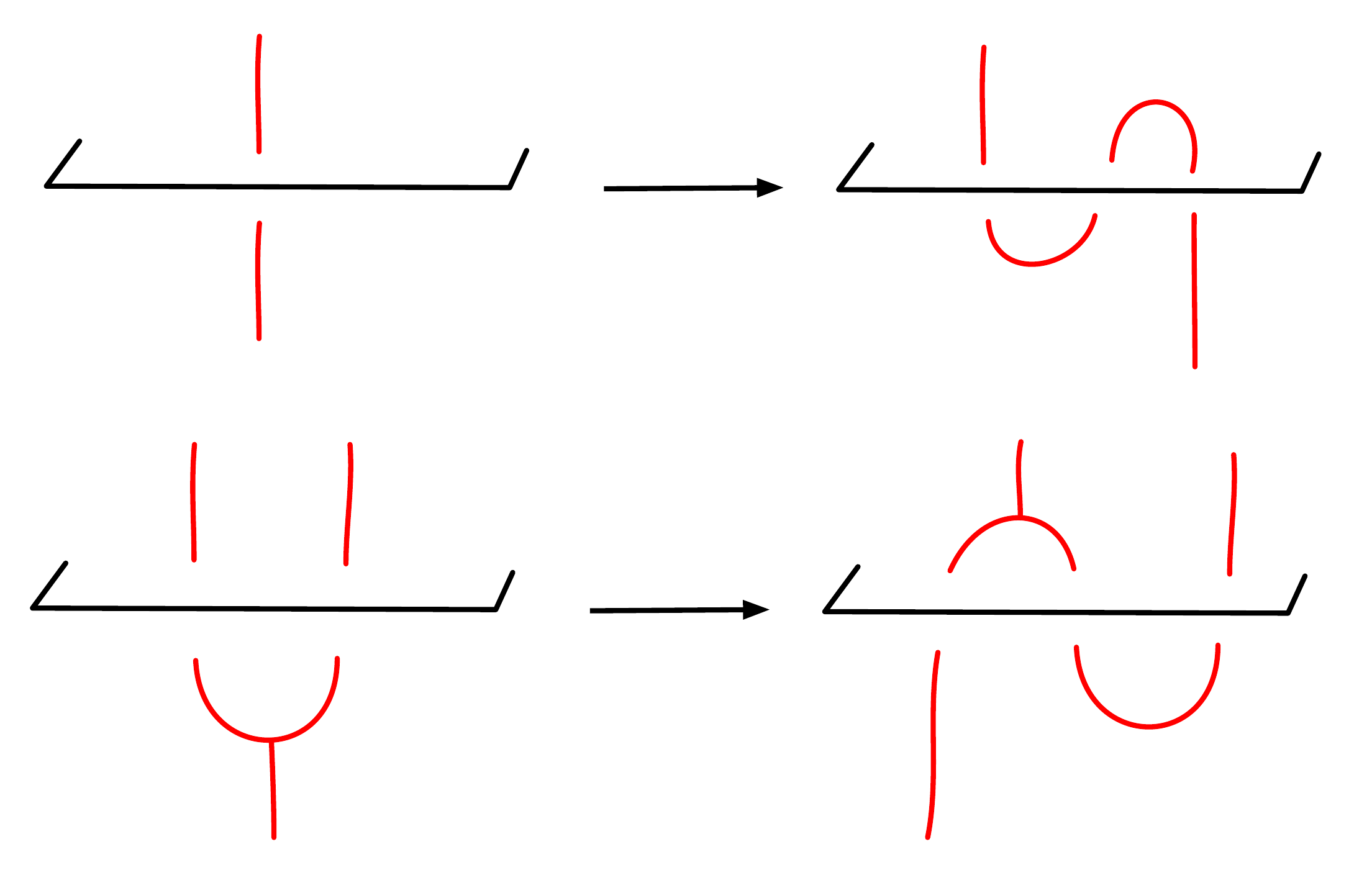}};
        \begin{scope}[x={(image.south east)}, y={(image.north west)}]
             \node at (0.50,0.84) {{\small Move $S_1$}};
            \node at (0.50,0.37) {{\small Move $S_2$}};
           
        \end{scope}
    \end{tikzpicture}
    \caption{Two moves on the spine of the handlebody-knot}
    \label{fig:2moves}
\end{figure}

\subsubsection{Definition and properties of handlebody-knots} 
We review the definition and some properties of handlebody-knots.
\begin{definition}
    A \textit{handlebody-knot} \textit{HK} is an embedding of finitely many handlebodies of positive genus in the 3-sphere $S^3$. Two handlebody-knots are equivalent if they are ambient isotopic.
\end{definition}
It is well-known that one does not lose information by considering a spine of the handlebody-knot $HK$: a graph whose regular neighborhood is $HK$. This is advantageous because 3-dimensional objects are now captured by diagrams of 1-dimensional objects.
\begin{definition}
    A handlebody-knot $HK$ is \textit{split} if there
exists a 2-sphere $S$ in $S^3$ such that $S \cap HL = \emptyset$ and both components of the complement $S^3
\backslash S$ have non-trivial intersection with $HK$.
\end{definition}
\begin{definition}
   An \textit{$n$-decomposing sphere} $S$ for a handlebody-knot $HK$ in $S^3$ is a 2-sphere that intersects $HK$
at $n$ disjoint disks so that the $n$-punctured sphere $S \cap (S^3 \backslash HL)$ is incompressible and non-boundary parallel.
\end{definition}
\begin{definition}
   A handlebody-knot is \textit{$n$-decomposable} if it admits an $n$-decomposing sphere.
\end{definition}

\begin{remark}
    Observe that if the bridge sphere is perturbed, we can find a natural 2-sphere $S$ that intersects a handlebody-knot in 2 disjoint disks (see Figure \ref{reducingsphere}). However, this 2-sphere is not a 2-decomposing sphere since the corresponding punctured sphere $S \cap (S^3 \backslash HL)$ is in fact boundary parallel.  
    We list three forbidden 0,1,2-decomposing spheres in Figure \ref{fig: forbidden decomposing sphere}.

    \begin{figure}
        \centering
        \includegraphics[width=0.5\linewidth]{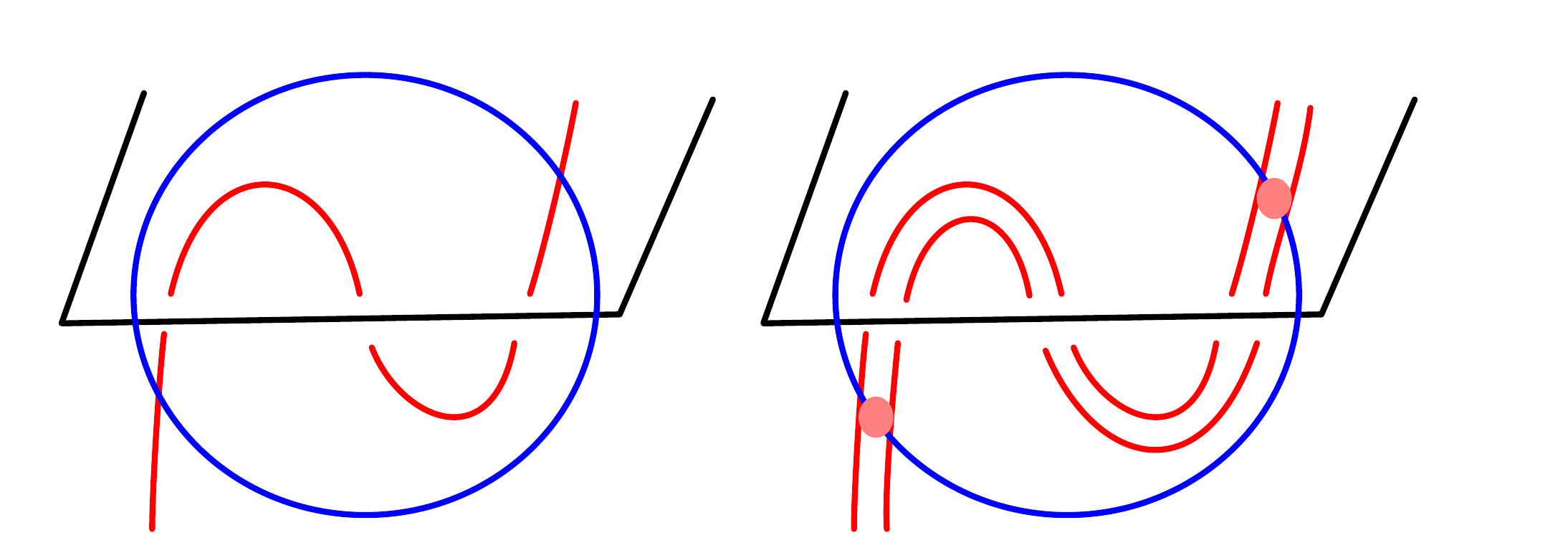}
        \caption{An admissible decomposing sphere that intersects the handlebody-knot in two parallel disks.}
        \label{reducingsphere}
    \end{figure}

    \begin{figure}
        \centering
        \includegraphics[width=0.75\linewidth]{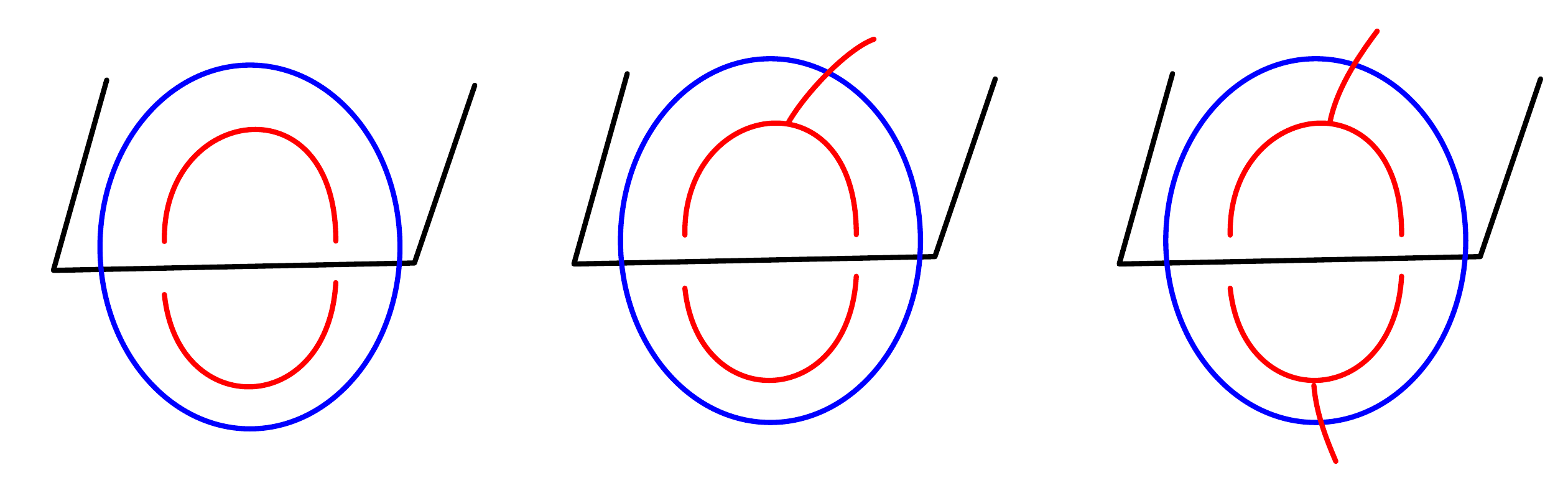}
        \caption{Forbidden $0,1,2$-decomposing spheres (left to right, respectively)}
        \label{fig: forbidden decomposing sphere}
    \end{figure}
\end{remark}

\subsubsection{Axiomatic way to define an invariant of knotted handlebodies in a 3-manifold}
Hempel \cite{MR1838999} was the person who originally formulated a complexity measure for knotted objects in 3-manifolds of this style using the curve complex. Johnson \cite{johnson2006heegaard} later on generalized to the pants complex. There are numerous other works, but they all follow this same style. Let $(M,\Lambda)$ be a (3-manifold, handlebody-knot) pair.

\textbf{Step 1}: Decompose $(M,\Lambda)$ as a Heegaard splitting (also called a bridge splitting). This is a decomposition of $(M,\Lambda)=(H_1,T_1)\cup_{\Sigma} (H_2,T_2)$, where $(H_j,T_j)$ is a trivial tangle in a handlebody and the common intersection $\Sigma$ is a surface possibly with marked points.

\textbf{Step 2}: Pick a favorite simplicial complex $\mathcal{C}(\Sigma),$ where each vertex is a collection of essential pairwise nonisotopic simple closed curves on $\Sigma$. Two vertices differ by an edge if the corresponding collections differ by an elementary move.

\textbf{Step 3}: Let $\mathcal{D}_j$ be the set of vertices where each of the corresponding curves bounds a disk or a once-punctured disk in $(H_j,T_j)$. The complexity for of the splitting $(H_1,T_1)\cup_{\Sigma} (H_2,T_2)$ is the distance using the edge metric $d(\mathcal{D}_1,\mathcal{D}_2)$ measured in our choice of the complex of curves on $\Sigma.$

If one wants an invariant out of this construction, there is an extra step to perform. Since Hempel distance is more well-known, we discuss complexes where each vertex contains more than one curve. 

\textbf{Step 4a}: Show that as one stabilizes the splittings, the complexity converges to a well-defined number. This is more in the spirit of \cite{johnson2006heegaard}.

Step 4a above can be difficult to carry out since one would have to calculate the complexity for infinitely many splittings. To make the task easier, some authors chose another route to define invariants.

\textbf{Step 4b}: Minimize the complexity in Step 3 over all minimal Heegaard splittings.

\subsubsection{Handlebody-knot invariant from the dual curve complex}
Let $D(\Sigma_c)$ denote the dual curve distance of a bridge sphere, that is the minimum of the dual curve graph distance between pants decompositions that define the two handlebodies corresponding to a Heegaard slitting along a $c$-punctured sphere, respectively. 
In handlebody-knot theory, there are several ways to decompose a handlebody-knot into simpler handlebody-knots with smaller bridge indices. In this work, we focus on order 1 and order 2 connected sum. 
If a handlebody-knot admits a bridge splitting with 3 punctures, then the handlebody-knot is trivial. 
Our handlebody-knots are not $0,1,2$-decomposable, meaning that there does not exist a decomposing sphere that intersects the handlebody knot at 0, 1, or 2 disks, such that these disks are not parallel. In the literature, this condition implies that the handlebody-knots are irreducible and indecomposable.

Let \( s_i \ (i = 1, 2) \) denote the number of \( S_i \) moves that have been performed on \( \Sigma_c \), where \( c \) represents the number of initial punctures.  
After performing these moves, the number of punctures becomes \( c + 2s_1 + s_2 \).  
We shall denote the resulting surface by \( \Sigma_{c, s_1, s_2} \) or \( \Sigma_{c + 2s_1 + s_2} \). 
Define 
\[
B_\Sigma(c,s_1,s_2) = \frac{1}{4}D(\Sigma_{c,s_1,s_2}) - \frac{s_1+s_2}{4}.
\]

The main result of this section is Theorem \ref{main invariant}.  

\begin{lemma}\label{bound below invariant}
For an irreducible and indecomposible handlebody-knots,
\[D(\Sigma_n) \ge \frac{n}{4}.\]
where $n\ge4$ is the number of punctures. 
\end{lemma}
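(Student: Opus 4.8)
The plan is to reduce Lemma~\ref{bound below invariant} to Theorem~\ref{prop dC and dP} (equivalently Theorem~\ref{general complex distance}), applied on the $n$-punctured sphere $\Sigma_n$. First I would record the combinatorial setup: a pants decomposition of a genus-$0$ surface with $n$ punctures consists of exactly $N = n-3$ curves, since $\chi(\Sigma_n) = 2 - n$ and each pair of pants has $\chi = -1$. So the dual curve complex $\mathcal{C}^*(\Sigma_n)$ is an admissible multi-curve complex with $N = n-3$, and Theorem~\ref{prop dC and dP} gives $\dpw(P,T) \ge k(N-1)+1 = k(n-4)+1$ whenever every curve of $P$ is at curve-complex distance at least $k$ from every curve of $T$.

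The heart of the argument is then to show that for an irreducible, indecomposable handlebody-knot, any pants decomposition $P$ defining $H_1$ and any pants decomposition $T$ defining $H_2$ satisfy the hypothesis with $k$ bounded below — at minimum $k \ge 1$, and ideally $k \ge 2$. The forbidden $0,1,2$-decomposing spheres are exactly what rules out small distances here: if some curve $x_P \in P$ equals some curve $x_T \in T$ (the $k=0$ case), that curve bounds a disk or once-punctured disk on both sides, and gluing these produces a sphere meeting $HK$ in at most one disk — a $0$- or $1$-decomposing sphere (or a reducing sphere), contradicting irreducibility/indecomposability. Similarly, $d_C(x_P, x_T) = 1$ means disjoint disk/punctured-disk bounding curves on the two sides, and the standard surgery argument produces a decomposing sphere intersecting $HK$ in at most two disks, again forbidden. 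I would pick the $P, T$ realizing $D(\Sigma_n) = \dpw(P,T)$, set $k := \min\{d_C(x_P, x_T)\}$, argue $k \ge 2$ from the indecomposability hypotheses, and conclude
\[
D(\Sigma_n) = \dpw(P,T) \ge k(n-4) + 1 \ge 2(n-4) + 1 = 2n - 7 \ge \frac{n}{4},
\]
the last inequality holding for $n \ge 4$. Even the weaker bound $k \ge 1$ gives $D(\Sigma_n) \ge (n-4)+1 = n-3 \ge n/4$ for $n \ge 4$, which already suffices for the stated lemma, so the argument is robust.

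The main obstacle I anticipate is Step~2: carefully justifying that $d_C(x_P, x_T) \le 1$ forces an honest forbidden decomposing sphere, rather than merely a sphere that is compressible or boundary-parallel (cf. the remark around Figure~\ref{reducingsphere}, where a perturbed bridge sphere gives a $2$-sphere meeting $HK$ in two \emph{parallel} disks that is \emph{not} a $2$-decomposing sphere). So one must verify that the sphere obtained from the disk/punctured-disk surgery is incompressible and non-boundary-parallel in $S^3 \setminus HK$, using that $P$ and $T$ are genuine pants decompositions (no curve parallel to a puncture-boundary, all curves essential) and that the handlebody-knot is not $0,1,2$-decomposable in the precise sense defined above. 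A secondary, more bookkeeping-type issue is handling once-punctured-disk bounding curves on equal footing with disk-bounding curves throughout the surgery argument; this should be routine but needs to be stated. If establishing $k \ge 2$ in full generality proves delicate, falling back to $k \ge 1$ — which only needs that $P$ and $T$ share no curve, i.e. no $0$- or $1$-decomposing sphere — still proves the lemma.
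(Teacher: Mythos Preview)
Your approach is different from the paper's, and the gap you flag as the ``main obstacle'' is fatal rather than routine. You want to show that the minimizing pants decompositions $P,T$ share no curve ($k\ge 1$) and then apply Theorem~\ref{prop dC and dP} to get $D(\Sigma_n)\ge n-3\ge n/4$. But $k=0$ genuinely occurs. Take the unknot in $3$-bridge position with punctures $p_1,\dots,p_6$ and arcs $a_1=p_1p_2,\ a_2=p_3p_4,\ a_3=p_5p_6$ above and $b_1=p_2p_3,\ b_2=p_4p_5,\ b_3=p_6p_1$ below. The curve $\gamma$ encircling $\{p_1,p_2,p_3\}$ bounds a once-punctured disk on each side (pierced by $a_2$ above and by $b_3$ below), and it extends to $P_1=\{\gamma,\alpha,\beta\}$ defining $H_1$ (with $\alpha$ around $p_1,p_2$ and $\beta$ around $p_5,p_6$) and to $P_2=\{\gamma,\alpha',\beta'\}$ defining $H_2$ (with $\alpha'$ around $p_2,p_3$ and $\beta'$ around $p_4,p_5$). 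The glued sphere $D_1\cup_\gamma D_2$ meets the knot in two points but is boundary-parallel in the solid-torus complement, so it is \emph{not} a $2$-decomposing sphere and indecomposability is not contradicted. (Your side remark that the glued sphere meets $HK$ in ``at most one disk'' is also off: two once-punctured disks give two.) The same phenomenon arises for any prime knot in a stabilized bridge position.

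This is exactly why the paper does not route through Theorem~\ref{prop dC and dP}. The introduction to Section~\ref{Applications to 3-manifolds} says explicitly that applying Section~\ref{section relation distance} requires ``an additional condition that $k\neq 0$'' and that the bounds here are designed to be ``independent of $k$.'' The paper's proof is by strong induction on $n$: assuming $D(\Sigma_n)<n/4$, there must be a curve $\gamma$ common to $P_1$ and $P_2$; one then cuts $\Sigma_n$ along $\gamma$ into pieces carrying $n'$ and $n''$ punctures and applies the inductive hypothesis to each piece, with direct arguments for the small cases $n'\le 3$. The split-and-recurse step is precisely what absorbs the possibility of shared curves, and it is also why the resulting bound is only $n/4$ rather than the $n-3$ your approach would have produced.
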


\begin{proof} 
Denote by $P_1$ and $P_2$ the above and below pants decompositions of the Heegaard surface $\Sigma_n$. 
We will prove by induction in $n$.

\paragraph{Base case:}
For $n=4$, we need to show that 
\[D(\Sigma_4) \ge \frac{4}{4}=1.\] 
If $D(\Sigma_4) =0 < 1$, then $P_1$ and $P_2$ have a common pants curve $\gamma$. Since $\gamma$ is the boundary of either a disk or a punctured disk, it gives rise to a $0,1,2$-decomposing sphere, and all these decomposing sphere are forbidden (see Figure \ref{fig:decomposable}). This contradicts our hypothesis that our handlebody-knots are irreducible and indecomposible. 
\begin{figure}[h]
    \centering
    \includegraphics[width=0.5\linewidth]{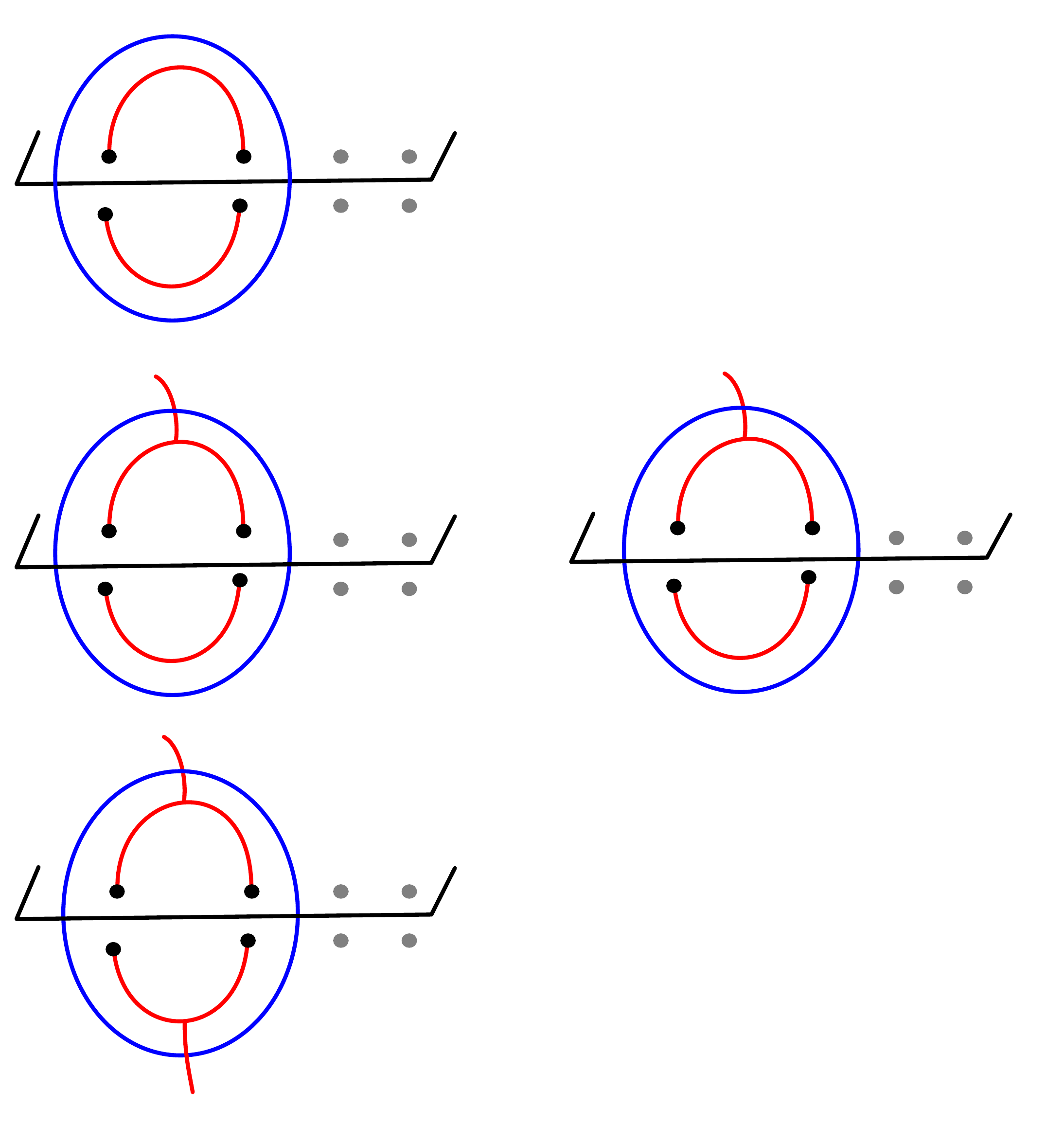}
    \caption{A curve bounds two punctures that is fixed along the path gives rise to a forbidden 0,1,2-decomposing spheres}
    \label{fig:decomposable}
\end{figure}

Due to our inductive argument as in what follows, we treat the case $n=5$ separately.
For $n=5$, each pants decomposition  $P_1$ and $P_2$ has two pants curves. Assume $P_1=\{\gamma_1,\gamma_2\}$. Each of them must be the boundary component of a pair of pants whose the other two components are two punctures among the $n=5$ punctures of $\Sigma_5$. 
By similar argument as in Base case, both $\gamma_1,\gamma_2$ do not belong to $P_2$ because otherwise it leads to a contradiction to our hypothesis that our handlebody-knots are irreducible and indecomposible. 

\paragraph{Inductive step:}
Suppose the claim holds for all $i\in[4,n-1]\cap\mathbb{N}$, meaning that 
\[
D(\Sigma_i) \ge \frac{i}{4}
\]
for all $i\in[4,n-1]\cap\mathbb{N}$. 
Consider the new Heegaard surface that arises after performing either move of type 1 (an $S_1$) or of type 2 (an $S_2$)  
on the handlebody-knot.
The current Heegaard surface is an $(n-1)$-punctured sphere. Depending on the type of the performed move, the new Heegaard surface is a sphere with either $n$ or $n+1$ punctures: 
$\Sigma_{n}$ arises when performing a move of type 2  and $\Sigma_{n+1}$ arises when performing a move of type 1. 
The number of curves in a pants decomposition is $n-3$ for $\Sigma_{n}$ and $n-2$ for $\Sigma_{n+1}$. We need to show that
\[
D(\Sigma_n) \ge \frac{n}{4}
\]
and 
\[
D(\Sigma_{n+1}) \ge \frac{n+1}{4}.
\]
For simplicity, in what follows we only write down the proof for the lower bound on $D(\Sigma_n)$. It is similar argument for $D(\Sigma_{n+1})$. 
We will prove by contradiction. Assume that
\[
D(\Sigma_n) < \frac{n}{4}
\]
We first notice that there must be a curve $\gamma$ that is fixed along the path from $P_1$ to $P_2$. Indeed, if the curves of $P_1$ are all different from the curves of $P_2$, the dual curve distance $\dpw(P_1,P_2)$ must be at least the number of pants curves, which is $n-3$ curves for $\Sigma_n$, hence
\[D(\Sigma_n)\ge n-3 \ge \frac{n}{4},
\] 
which is not the case. 
The curve $\gamma$ separates the $n$ punctures in $\Sigma_n$ into two sets of punctures.  
Let $n'$ and $n''$ be the numbers of punctures. 
Since $\gamma$ is a pants curve, we know that $n',n''\ge 2$.  
If either $n'=2$ or $n''=2$, then similar to the base case, $\gamma$ gives rise to a forbidden $0,1,2$-decomposing sphere, contradicting our hypothesis that our handlebody-knots are not $0,1,2$-decomposable (see Figure \ref{fig:decomposable}). 
Therefore,
\[
n',n''\ge 3.
\]
\paragraph{Case 1: $n'=n''=3$.} In this case $n=n'+n''=6$.  
Since $\gamma$ is a pants curve, there must exist a pants curve $\gamma'$ of $P_1$ bounds two punctures among the $n'=3$ punctures, and another pants curve $\gamma''$ of $P_1$ bounds two punctures among the $n''=3$ punctures. 
If $\gamma'$ stays fixed along the path from $P_1$ to $P_2$, using similar argument as in the base case, $\gamma'$ gives rise to a forbidden $0,1,2$-decomposing sphere, contradicting our hypothesis that our handlebody-knots are not $0,1,2$-decomposable (see Figure \ref{fig:decomposable}). 
Therefore $\gamma'\notin P_2$.
Similarly, $\gamma''\notin P_2$.
Then,
\[
D(\Sigma_n)\ge 2 \ge \frac{n}{4}
\] 
\paragraph{Case 2:}
If $n'=3$, we have
$n''= n-3\ge4$.  
By inductive hypothesis and using similar argument as in Case 1, we have
\[
D(\Sigma) \ge 1+ \frac{n''}{4} \ge \frac{n}{4}.
\]  
\paragraph{Case 3:} Both $n',n''\ge4$.
By inductive hypothesis and using similar argument as in Case 1, we have
\[
D(\Sigma_{n})
\ge 
\frac{n'}{4}+ \frac{n''}{4}
=\frac{n}{4}.
\]
By the principle of strong induction, we are done.  
\end{proof} 

In summary, we start from \( \Sigma_c \)  where \( c \) is the initial number of punctures.  
After performing \( s_1 \) type 1 moves and \( s_2 \) type 2 moves, the number of punctures becomes \( c + 2s_1 + s_2 \).  
By Lemma \ref{bound below invariant}, for all \( s_1, s_2 \), we have
 \[
 D(\Sigma_{c,s_1,s_2}) \ge \frac{c+2s_1+s_2}{4}.
 \]
 For our purpose, we define
  \[
  B_{\Sigma}(c,s_1, s_2)
= \frac{1}{4}D(\Sigma_{c,s_1,s_2}) - \frac{s_1+s_2}{4}
\ge \frac{c+2s_1+s_2}{4}  - \frac{s_1+s_2}{4} 
\ge \frac{c}{4}\ge1
 \]
 for all $s_1,s_2$ and for all $c\ge4$.
We want to show $B(\Sigma_{c,s_1,s_2})$ converges. 
To simplify the index notation, denote $\Sigma_{s_1+1}:=\Sigma_{c,s_1+1,s_2}$ the surface obtained by performing an additional move of type 1 on $\Sigma_{c,s_1,s_2}$. 
Zupan already proved in \cite[Lemma 5.1]{zupan2013bridge} that 
\begin{equation}\label{eq-move1-zupan}
D(\Sigma_{s_1+1})\le D(\Sigma_{s_1})+1.
\end{equation}
We now also show that a similar result holds when we perform an additional move of type 2. Denote
$\Sigma_{s_2+1}:=\Sigma_{c,s_1,s_2+1}$ the surface obtained by performing a move of type 2 on $\Sigma_{c,s_1,s_2}$.

\begin{lemma}\label{move 2 invariant non increasing}
    $D(\Sigma_{s_2+1})\leq D(\Sigma_{s_2})+1$.
\end{lemma}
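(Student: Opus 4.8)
The plan is to mimic the strategy already used for the type-1 move in \eqref{eq-move1-zupan} (Zupan's Lemma 5.1) and for the stabilization argument in the Hatcher--Thurston setting: exhibit, for a pair of pants decompositions realizing $D(\Sigma_{s_2})$, a pair of pants decompositions on $\Sigma_{s_2+1}$ that define the two handlebodies and are joined by an edge-path at most one longer. Concretely, let $P_1$ and $P_2$ be pants decompositions of $\Sigma_{s_2} = \Sigma_{c,s_1,s_2}$ with $P_i$ defining $(H_i,T_i)$ and $\dpw(P_1,P_2) = D(\Sigma_{s_2})$. Performing the type-$S_2$ move on the spine adds a single puncture to the bridge sphere (the puncture count goes from $n-1$ to $n$, so a pants decomposition gains exactly one curve). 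First I would describe explicitly, near the site of the $S_2$ move, the local picture: the new puncture is cut off from the rest of the surface by a small curve $\delta$ bounding a once-punctured disk on \emph{one} side (say $(H_1,T_1)$) and, after a single elementary move $\delta \rightsquigarrow \delta'$ within a complexity-one subsurface, a curve $\delta'$ bounding a once-punctured disk on the other side $(H_2,T_2)$.

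Next I would build the two new pants decompositions. Set $\widehat P_1 = P_1 \cup \{\delta\}$ and $\widehat P_2 = P_2 \cup \{\delta'\}$; because $\delta$ (resp. $\delta'$) is disjoint from all curves of $P_1$ (resp. $P_2$) and bounds a (punctured) disk in $H_1$ (resp. $H_2$), each $\widehat P_i$ is a pants decomposition of $\Sigma_{s_2+1}$ that defines $(H_i,T_i)$. It then remains to bound $\dpw(\widehat P_1,\widehat P_2)$. A geodesic edge-path $P_1 = Q_0, Q_1, \dots, Q_m = P_2$ in $\mathcal{C}^*(\Sigma_{s_2})$ with $m = D(\Sigma_{s_2})$ lifts to the path $Q_0 \cup \{\delta\}, Q_1 \cup \{\delta\}, \dots, Q_m \cup \{\delta\}$ in $\mathcal{C}^*(\Sigma_{s_2+1})$ — each step is still a single dual-curve move since $\delta$ lies in the part of the surface untouched by the move — and then one final elementary move replaces $\delta$ by $\delta'$ to reach $\widehat P_2$. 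This gives $\dpw(\widehat P_1,\widehat P_2) \le m + 1 = D(\Sigma_{s_2}) + 1$, and since $D(\Sigma_{s_2+1})$ is a minimum over all defining pairs, $D(\Sigma_{s_2+1}) \le D(\Sigma_{s_2}) + 1$, as desired.

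The main obstacle I anticipate is verifying that the curves of a geodesic path in $\mathcal{C}^*(\Sigma_{s_2})$ can indeed be chosen to stay disjoint from a fixed small curve $\delta$ encircling the newly created puncture — equivalently, that the $S_2$ move can be localized so that $\delta$ is unlinked from an optimal path, and that adding the new puncture does not force some intermediate $Q_i \cup \{\delta\}$ to fail to be a pants decomposition. This is where I would invoke the precise description of the $S_2$ move from Figure \ref{fig:2moves} and the fact that the move acts within a small ball meeting the bridge sphere in a disk, so $\delta$ may be taken with support disjoint from the supports of all the moves along the chosen geodesic; a short isotopy argument then finishes it. The analogous reasoning for $D(\Sigma_{s_1+1})$ is exactly Zupan's Lemma 5.1, which we are free to cite, so only the $S_2$ case needs this explicit local analysis.
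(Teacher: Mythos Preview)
Your proposal is essentially the paper's own argument: take realizing pants decompositions $P_1,P_2$, adjoin one new curve to each to obtain defining pants decompositions on $\Sigma_{s_2+1}$, lift the old geodesic by carrying the extra curve along unchanged, and append a single elementary move at the end (the paper's Figures~\ref{move2-2cases} and~\ref{fig:additional step}). One small correction to make before writing it up: a curve that cuts off \emph{only} the new puncture is peripheral on the punctured sphere and hence not a legitimate pants curve, so your $\delta$ must instead encircle the new puncture together with the existing puncture $x$ at which the $S_2$ move is performed---this is exactly how the paper places its new curve inside the pair of pants $P_x\subset P_1$ containing $x$, and with that adjustment the disjointness from every $Q_i$ (your anticipated obstacle) is automatic, since such a $\delta$ lives in a small neighborhood of the puncture $x$ that all essential curves on $\Sigma_{s_2}$ already avoid.
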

\begin{proof}
Let $P_1$ and $P_2$ be the pants decompositions realizing the distance $D(\Sigma_{s_2})$.  
Consider the puncture $x\in H_1\cap \Sigma_{s_2}$ where an additional type 2 move is performed. It can be viewed as a puncture (i.e., a component) of a pair of pants $P_x$ in the pants decomposition $P_1$.  
There are two possible cases for the other two boundary components of $P_x$: they consist of either a puncture and a pants curve, or two pants curves (see Figure \ref{move2-2cases}).

After the additional type 2 move, the Heegaard surface has one more puncture, which we denote by $\Sigma_{s_2+1}$.  
Due to this additional puncture, the pants decompositions realizing the distance $D(\Sigma_{s_2+1})$ must each have one more curve than $P_1$ and $P_2$, respectively.  
The new puncture, along with the new simple closed curves disjoint from $P_1$, must lie inside $P_x$, as they form a pants decomposition of $\Sigma_{s_2+1}$ with the existing curves (see Figure \ref{move2-2cases}).

Let $P'_1$ be the pants decomposition consisting of the same curves as $P_1$, together with an additional curve lying inside $P_x$.  
Let $P'_x \subset P_2$ be the pair of pants in $P_2$ corresponding to $P_x$.  
Similar as above, let $P'_2$ be the pants decomposition consisting of the same curves as $P_2$, along with an additional curve lying inside $P'_x$. 
This construction ensures that every curve in $P'_1$ bounds a disk or a once-punctured disk in $(H_1, T_1)$, and every curve in $P'_2$ bounds a disk or a once-punctured disk in $(H_2, T_2)$.

\begin{figure}[ht!]
\centering
\labellist 
\pinlabel {$x$} at 340 1050
\pinlabel {$P_x$} at 250 1120
\endlabellist
\includegraphics[width=0.75\linewidth]{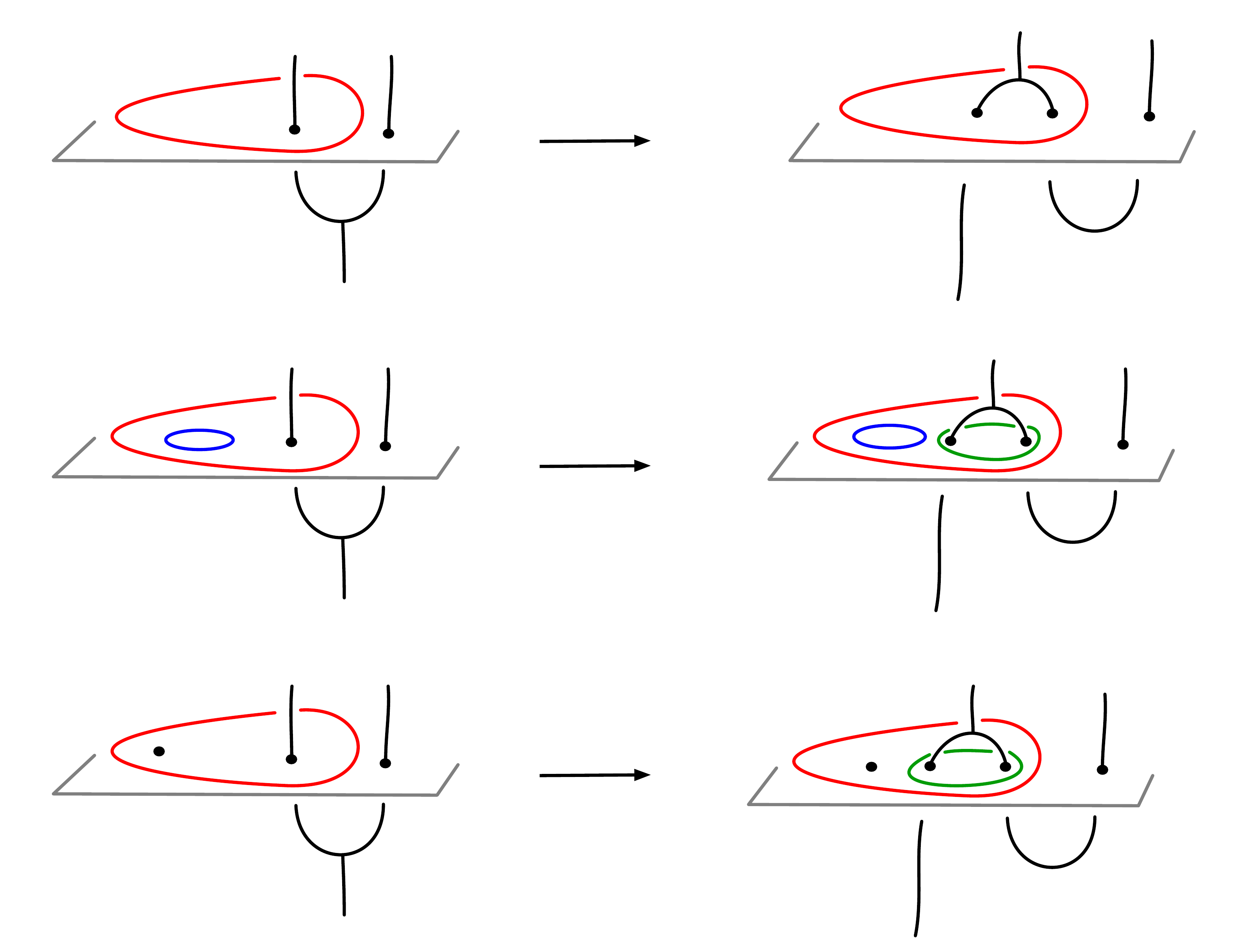}
\caption{Two cases for what happens around the puncture where an additional move of type 2 is performed}
\label{move2-2cases}
\end{figure}

\begin{figure}[ht!]
\centering
\includegraphics[width=0.5\linewidth]{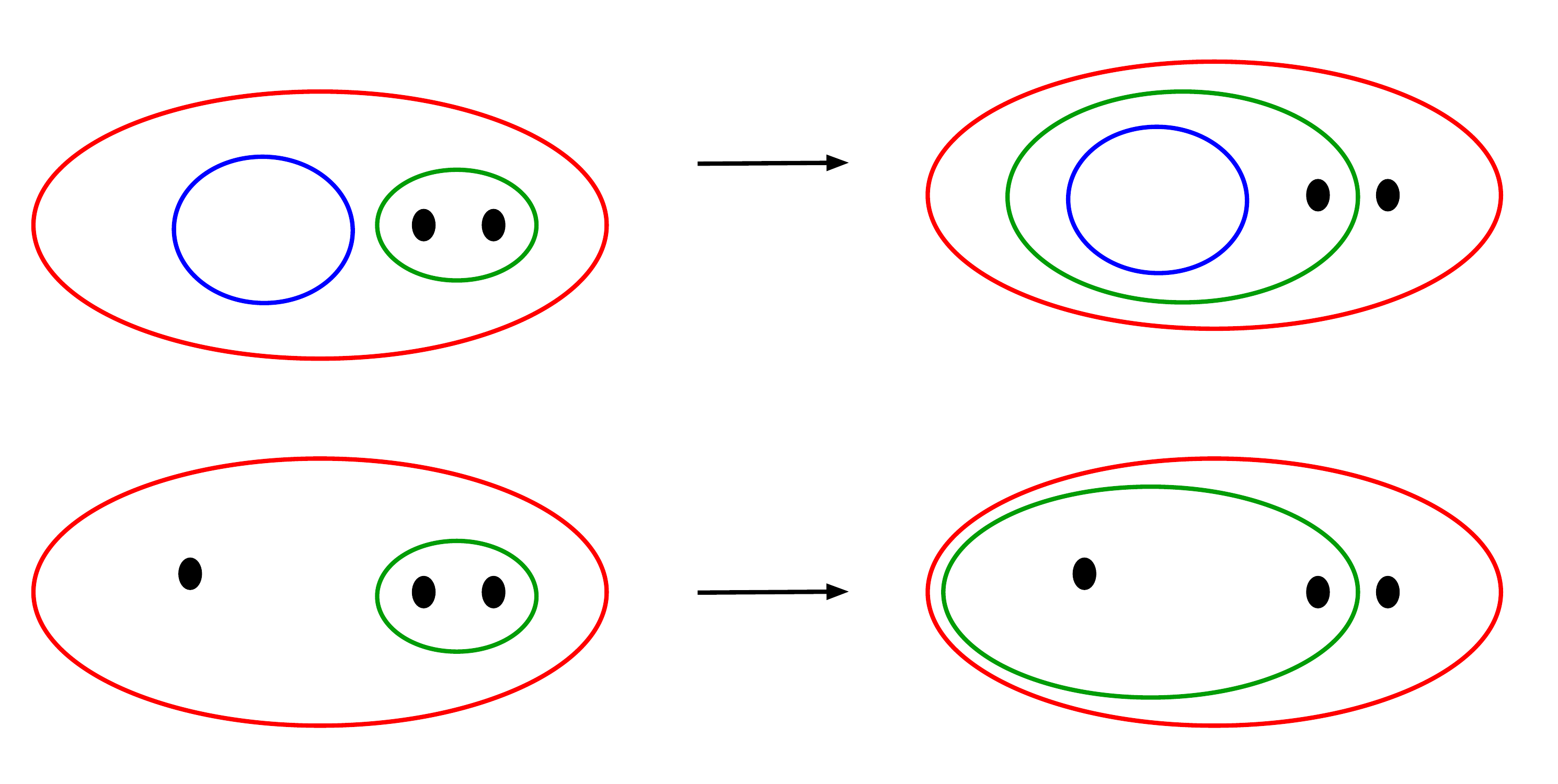}
\caption{An additional step obtained by flipping the new pants curve (the green one)}
\label{fig:additional step}
\end{figure}

Recall that
\[
B_\Sigma(c,s_1,s_2) = \frac{1}{4}D(\Sigma_{c,s_1,s_2}) - \frac{s_1+s_2}{4}.
\]
To simplify notation, let
\[
B(\Sigma_{s_1+1,s_2})
:= B_\Sigma(c,s_1+1,s_2)
\]
and
\[
B(\Sigma_{s_1,s_2+1})
:= B_\Sigma(c,s_1,s_2+1).
\]
To obtain the path from $P'_1$ to $P'_2$, we retain the steps from $P_1$ to $P_2$, and we add an additional step that corresponds to the new curves  (see Figure \ref{fig:additional step}).
\end{proof}

\begin{theorem}
\label{main invariant}
The limit $\lim_{s_1\to\infty, s_2\to\infty} B_\Sigma(c,s_1,s_2)$ exists and is an invariant.      
\end{theorem}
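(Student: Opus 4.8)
The plan is to package the two monotonicity lemmas together with the uniform lower bound from Lemma~\ref{bound below invariant} into a standard ``bounded monotone sequence converges, and the limit is intrinsic'' argument, in direct analogy with Johnson's Theorem~17 in \cite{johnson2006heegaard}. First I would observe that the quantity $B_\Sigma(c,s_1,s_2)=\tfrac14 D(\Sigma_{c,s_1,s_2})-\tfrac{s_1+s_2}{4}$ is non-increasing in each of the two variables $s_1$ and $s_2$ separately: performing one more type~1 move replaces $D(\Sigma_{s_1})$ by at most $D(\Sigma_{s_1})+1$ by Zupan's inequality \eqref{eq-move1-zupan}, so $B$ drops by $\tfrac14\big(D(\Sigma_{s_1+1})-D(\Sigma_{s_1})-1\big)\le 0$; identically, one more type~2 move gives $D(\Sigma_{s_2+1})\le D(\Sigma_{s_2})+1$ by Lemma~\ref{move 2 invariant non increasing}, so $B$ again does not increase. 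Hence along any sequence obtained by interleaving type~1 and type~2 moves, $B$ is weakly decreasing. On the other hand, Lemma~\ref{bound below invariant} shows $B_\Sigma(c,s_1,s_2)\ge \tfrac{c}{4}\ge 1>0$ for every $s_1,s_2$ and every $c\ge 4$, so the values are bounded below. A monotone, bounded-below sequence of real numbers converges; taking the infimum over all $(s_1,s_2)$ gives the value of $\lim_{s_1,s_2\to\infty} B_\Sigma(c,s_1,s_2)$.

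The second half is to show the limit does not depend on the choices made in the construction, so that it is a genuine invariant of the handlebody-knot (equivalently, of the pair $(S^3,\Lambda)$, or of $\Lambda$ up to ambient isotopy). Here I would invoke Ozawa's stable equivalence theorem \cite[Theorem~3.2]{ozawa2021stable}: any two bridge splittings of a given handlebody-knot become isotopic after a finite sequence of $S_1$ and $S_2$ stabilization moves. Concretely, if $\Sigma'$ and $\Sigma''$ are two bridge spheres for the same handlebody-knot with $c'$ and $c''$ punctures respectively, then there are move-counts $(a_1,a_2)$ and $(b_1,b_2)$ so that $\Sigma'_{c',a_1,a_2}$ and $\Sigma''_{c'',b_1,b_2}$ are isotopic as bridge surfaces, hence have equal dual curve distance $D$, and in particular equal $B$-value. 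Since $B$ is eventually constant along each monotone-convergent sequence only in the limit — but equal at the common stabilized surface — the two limits (each being the infimum of the corresponding decreasing net) must agree: the tail of the first sequence can be routed through the common stabilization and likewise the second, and monotonicity forces both infima to equal $B$ at that common surface adjusted by the move count, hence to coincide. I would state this as: $\lim B_{\Sigma'} = \lim B_{\Sigma''}$, so the limit depends only on the handlebody-knot.

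The main obstacle I anticipate is the bookkeeping in the invariance step, not the convergence step. One must be careful that the normalization ``$-\tfrac{s_1+s_2}{4}$'' is exactly the one that makes $B$ agree on a common stabilization regardless of \emph{which} surface it was stabilized from: when $\Sigma'_{c',a_1,a_2}$ and $\Sigma''_{c'',b_1,b_2}$ are the same surface $\Sigma^\ast$ with $c^\ast$ punctures, we need $\tfrac14 D(\Sigma^\ast)-\tfrac{a_1+a_2}{4}$ computed from the $\Sigma'$ side to equal $\tfrac14 D(\Sigma^\ast)-\tfrac{b_1+b_2}{4}$ computed from the $\Sigma''$ side only after we also account for the difference $c'$ vs $c''$; this is where the relations $c^\ast = c' + 2a_1 + a_2 = c'' + 2b_1 + b_2$ enter, and one checks that $B$ is constant under adding one ``trivial'' move to an already-isotopic pair — which is precisely the content of the two monotonicity lemmas specialized to the case where equality holds. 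A secondary subtlety is that Ozawa's theorem is stated for stabilizations of bridge positions and one should confirm our $S_1$, $S_2$ moves (Figure~\ref{fig:2moves}) are exactly his, so that the stable-equivalence hypothesis applies verbatim; granting that, the argument closes.

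\begin{proof}
By Zupan's inequality \eqref{eq-move1-zupan} and Lemma~\ref{move 2 invariant non increasing}, performing an additional $S_1$ or $S_2$ move increases $D$ by at most one, so
\[
B_\Sigma(c,s_1+1,s_2)\le B_\Sigma(c,s_1,s_2),
\qquad
B_\Sigma(c,s_1,s_2+1)\le B_\Sigma(c,s_1,s_2).
\]
Thus $B_\Sigma(c,s_1,s_2)$ is non-increasing in $s_1$ and in $s_2$. By Lemma~\ref{bound below invariant}, $B_\Sigma(c,s_1,s_2)\ge \tfrac{c}{4}\ge 1$ for all $s_1,s_2$ and all $c\ge4$. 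A bounded, monotone net of real numbers converges, so $\lim_{s_1\to\infty,\,s_2\to\infty} B_\Sigma(c,s_1,s_2)$ exists and equals $\inf_{s_1,s_2} B_\Sigma(c,s_1,s_2)$.

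It remains to see that this limit depends only on the handlebody-knot, not on the chosen bridge splitting. Let $\Sigma'$ and $\Sigma''$ be two bridge spheres for the same handlebody-knot, with $c'$ and $c''$ punctures. By Ozawa's stable equivalence theorem \cite[Theorem~3.2]{ozawa2021stable}, there are move-counts $(a_1,a_2)$ and $(b_1,b_2)$ such that the stabilized surfaces $\Sigma'_{c',a_1,a_2}$ and $\Sigma''_{c'',b_1,b_2}$ coincide as bridge surfaces; in particular they have the same number of punctures $c^\ast = c'+2a_1+a_2 = c''+2b_1+b_2$ and the same dual curve distance $D(\Sigma^\ast)$. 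Since $B$ is computed as $\tfrac14 D - \tfrac14(\text{number of moves})$ and the number of moves is tied to the puncture count via the above relations, the common stabilized surface yields the same $B$-value from either side. As each limit is the infimum of the corresponding non-increasing net, routing the tail of the $\Sigma'$-net and of the $\Sigma''$-net through this common stabilization forces
\[
\lim_{s_1,s_2\to\infty} B_{\Sigma'}(c',s_1,s_2)
=
\lim_{s_1,s_2\to\infty} B_{\Sigma''}(c'',s_1,s_2).
\]
Hence the limit is an invariant of the handlebody-knot.
\end{proof}
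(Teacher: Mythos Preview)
Your convergence argument is identical to the paper's: bounded below by Lemma~\ref{bound below invariant}, non-increasing in each variable by \eqref{eq-move1-zupan} and Lemma~\ref{move 2 invariant non increasing}, hence the limit exists. The paper in fact stops there --- its proof writes out exactly the two monotonicity computations you wrote and then asserts ``thus the limit exists and is an invariant'' with no further discussion of independence from the initial bridge sphere.

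Where you go beyond the paper is the second paragraph, invoking Ozawa to show independence of the starting splitting, and here there is a real gap that you yourself flagged in your preamble but did not close in the proof. At the common stabilization $\Sigma^\ast$ you have
\[
B_{\Sigma'}(c',a_1,a_2)=\tfrac14 D(\Sigma^\ast)-\tfrac{a_1+a_2}{4},
\qquad
B_{\Sigma''}(c'',b_1,b_2)=\tfrac14 D(\Sigma^\ast)-\tfrac{b_1+b_2}{4},
\]
and these coincide only if $a_1+a_2=b_1+b_2$. The puncture relation you wrote down, $c'+2a_1+a_2=c''+2b_1+b_2$, pins down $2s_1+s_2$, not $s_1+s_2$; an $S_1$ move contributes $2$ punctures but only $1$ to the normalization, while an $S_2$ move contributes $1$ to each. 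So ``the number of moves is tied to the puncture count'' is not enough: one $S_1$ move and two $S_2$ moves both add two punctures but shift $B$ by $\tfrac14$ versus $\tfrac12$. Since this discrepancy is a constant along the entire tail, it propagates to the limit and your routing-through-the-common-stabilization argument does not force the two infima to agree. To repair this step you would need either a topological constraint that determines $s_1$ and $s_2$ separately from the pair (initial bridge sphere, stabilized bridge sphere) --- for instance an invariant of the bridge position that changes by different amounts under $S_1$ and $S_2$ --- or a different normalization in the definition of $B$ that depends only on intrinsic data of $\Sigma_{c,s_1,s_2}$ rather than on the move count.
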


\begin{proof}
    By Lemma \ref{bound below invariant},
    $\{B_\Sigma(c,s_1,s_2)\}_{s_1,s_2}$ is bounded from below by 1.
    Moreover, the sequence is also non-increasing. Indeed, by Lemma \ref{move 2 invariant non increasing} and inequality \eqref{eq-move1-zupan}, we have
    \begin{align*}
    B(\Sigma_{s_1+1,s_2}) &= \frac{1}{4}D(\Sigma_{s_1+1,s_2})-\frac{(s_1+1)+s_2}{4} \\
    & \leq \frac{D(\Sigma_{s_1,s_2})+1}{4}-\frac{(s_1+1)+s_2}{4}\\
    &=B(\Sigma_{s_1,s_2})
\end{align*}  
and
    \begin{align*}
    B(\Sigma_{ s_1,s_2+1}) &= \frac{1}{4}D(\Sigma_{ s_1,s_2+1})-\frac{(s_1+1)+s_2}{4} \\
    & \leq \frac{D(\Sigma_{s_1,s_2})+1}{4}-\frac{s_1 + s_2+1}{4}\\
    &=B(\Sigma_{s_1,s_2})
\end{align*}  
Thus limit $\lim_{s_1\to\infty, s_2\to\infty} B_\Sigma(c,s_1,s_2)$ exists and is an invariant.  
\end{proof}

\section{Future directions and questions}\label{open questions}
 \begin{enumerate}    
     \item Under special conditions on $P, P'$ can we find 3-manifolds with specific properties, like hyperbolic?

     There are some results that connect pants distance to volume of hyperbolic 3 manifolds \cite{Brock2003}. Is it possible to characterize pairs of pants that provide an answer in a converse direction, i.e, starting with some specific $P, P'$ is it possible to construct a 3-manifold?
     \item What about handlebody-knots and links in arbitrary 3-manifold?

     There is not a version of stable equivalence known for knots and links in arbitrary 3-manifolds, it would be interesting to explore the possibility of developing such a theory and trying to define similar invariants. Or is it possible to find invariants of handlebody links in 3-manifolds without a set of stabilization moves?
 \end{enumerate}

\printbibliography

\end{document}